\tikzstyle{snake}=[decorate, decoration={snake, segment length=1mm, amplitude=.5mm}]
\newcommand{\tikzmath}[2][]
{\vcenter{\hbox{\begin{tikzpicture}[#1]#2\end{tikzpicture}}}
}
\newcommand{\roundNbox}[6]{
	\draw[rounded corners=5pt, very thick, #1] ($#2+(-#3,-#3)+(-#4,0)$) rectangle ($#2+(#3,#3)+(#5,0)$);
	\coordinate (ZZa) at ($#2+(-#4,0)$);
	\coordinate (ZZb) at ($#2+(#5,0)$);
	\node at ($1/2*(ZZa)+1/2*(ZZb)$) {#6};
}
\tikzset{super thick/.style={line width=3pt}}
\tikzstyle{far>}=[decoration={markings, mark=at position 0.75 with {\arrow{>}}}, postaction={decorate}]
\tikzstyle{far<}=[decoration={markings, mark=at position 0.75 with {\arrow{<}}}, postaction={decorate}]
\tikzstyle{mid>}=[decoration={markings, mark=at position 0.55 with {\arrow{>}}}, postaction={decorate}]
\tikzstyle{mid<}=[decoration={markings, mark=at position 0.55 with {\arrow{<}}}, postaction={decorate}]
\tikzset{super thick/.style={line width=3pt}}
\tikzstyle{far>}=[decoration={markings, mark=at position 0.75 with {\arrow{>}}}, postaction={decorate}]
\tikzstyle{mid>}=[decoration={markings, mark=at position 0.55 with {\arrow{>}}}, postaction={decorate}]
\tikzstyle{mid<}=[decoration={markings, mark=at position 0.55 with {\arrow{<}}}, postaction={decorate}]
\tikzstyle{knot}=[preaction={super thick, white, draw}]
\tikzstyle{coupon}=[draw, very thick, rectangle, rounded corners=5pt]
\tikzset{Rightarrow/.style={double equal sign distance,>={Implies},->},
triplecd/.style={-,preaction={draw,Rightarrow}},
quadruplecd/.style={preaction={draw,Rightarrow,
shorten >=0pt
},
shorten >=1pt,
-,double,double
distance=0.2pt}}
\tikzset{
    tripleline/.style args={[#1] in [#2] in [#3]}{
        #1,preaction={preaction={draw,#3},draw,#2}
    }
}
\tikzstyle{triple}=[tripleline={[line width=.15mm,black] in
\tikzset{
    quadrupleline/.style args={[#1] in [#2] in [#3] in [#4]}{
        #1,preaction={preaction={preaction={draw,#4},draw,#3}, draw,#2}
    }
}
\tikzstyle{quadruple}=[quadrupleline={[line width=.3mm,white] in
\definecolor{violet}{RGB}{148,0,211}
\definecolor{DarkGreen}{RGB}{0,150,0}
\definecolor{rufous}{HTML}{A81C07}
\definecolor{boysenberry}{HTML}{873260}
\definecolor{OliveGreen}{HTML}{6D712E}
\definecolor{yellow}{RGB}{200, 120, 60}
\definecolor{medium-blue}{rgb}{0,0,.8}
\newcommand{\arxiv}[1]{\href{http://arxiv.org/abs/#1}{\tt arXiv:\nolinkurl{#1}}}
\newcommand{\arXiv}[1]{\href{http://arxiv.org/abs/#1}{\tt arXiv:\nolinkurl{#1}}}
\newcommand{\mathscinet}[1]{\href{http://www.ams.org/mathscinet-getitem?mr=#1}{\tt #1}}
\DeclareMathOperator{\coev}{coev}
\DeclareMathOperator{\End}{End}
\DeclareMathOperator{\ev}{ev}
\DeclareMathOperator{\Ext}{Ext}
\DeclareMathOperator{\FPdim}{FPdim}
\DeclareMathOperator{\gd}{gd}
\DeclareMathOperator{\gr}{gr}
\DeclareMathOperator{\Hom}{Hom}
\DeclareMathOperator{\id}{id}
\DeclareMathOperator{\Irr}{Irr}
\DeclareMathOperator{\Tor}{Tor}
\DeclareMathOperator{\Tr}{Tr}
\newcommand{\set}[2]{\left\{#1 \middle| #2\right\}}
\newcommand{\Rep}{\mathsf{Rep}}
\def\semicolon{;}
\def\applytolist#1{
    \expandafter\def\csname multi#1\endcsname##1{
        \def\multiack{##1}\ifx\multiack\semicolon
            \def\next{\relax}
        \else
            \csname #1\endcsname{##1}
            \def\next{\csname multi#1\endcsname}
        \fi
        \next}
    \csname multi#1\endcsname}
\def\calc#1{\expandafter\def\csname c#1\endcsname{{\mathcal #1}}}
\def\bbc#1{\expandafter\def\csname bb#1\endcsname{{\mathbb #1}}}
\def\bfc#1{\expandafter\def\csname bf#1\endcsname{{\mathbf #1}}}
\def\sfc#1{\expandafter\def\csname s#1\endcsname{{\sf #1}}}
\def\fc#1{\expandafter\def\csname f#1\endcsname{{\mathfrak #1}}}
\def\rmc#1{\expandafter\def\csname rm#1\endcsname{{\mathrm #1}}}
\theoremstyle{plain}
\newtheorem{thm}{Theorem}[section]
\newtheorem*{thm*}{Theorem}
\newtheorem{cor}[thm]{Corollary}
\newtheorem{lem}[thm]{Lemma}
\newtheorem{prop}[thm]{Proposition}
\newtheorem{question}[thm]{Question}
\newtheorem*{question*}{Question}
\newtheorem*{claim*}{Claim}
\theoremstyle{defn}
\newtheorem{defn}[thm]{Defintion}
\newtheorem*{trick*}{Trick}
\newtheorem{rem}[thm]{Remark}
\newtheorem*{rem*}{Remark}
\title{Rigidity of non-negligible objects of moderate growth in braided categories}
\author{Pavel Etingof and David Penneys}
\date{\today}
\begin{document}

\begin{abstract} Let $\Bbbk$ be a field, and let $\mathcal{C}$ be a Cauchy complete $\Bbbk$-linear braided category with finite dimensional morphism spaces and $\operatorname{End}(\mathbbm{1})=\Bbbk$. 
We call an indecomposable object $X$ of $\mathcal C$ {\bf non-negligible} if there exists $Y\in \mathcal{C}$ such that $\mathbbm{1}$ is a direct summand of $Y\otimes X$. 
We prove that every non-negligible object $X\in \mathcal{C}$ such that $\dim\operatorname{End}(X^{\otimes n})<n!$ for some $n$ is automatically rigid. 
In particular, if $\mathcal{C}$ is semisimple of moderate growth and weakly rigid, then $\mathcal{C}$ is rigid. 
As applications, we simplify Huang's proof of rigidity of representation categories of certain vertex operator algebras, and we get that for a finite semisimple monoidal category $\mathcal{C}$, the data of a $\mathcal{C}$-modular functor is equivalent to a modular fusion category structure on $\mathcal{C}$, answering a question of Bakalov and Kirillov. 
Furthermore, we show that if $\mathcal{C}$ is rigid and has moderate growth, then the quantum trace of any nilpotent endomorphism in $\mathcal{C}$ is zero. 
Hence $\mathcal{C}$ admits a semisimplification, which is a semisimple braided tensor category of moderate growth. 
Finally, we discuss rigidity in braided r-categories which are not semisimple, which arise in logarithmic conformal field theory. 
These results allow us to simplify a number of arguments of Kazhdan and Lusztig.
\end{abstract}

\maketitle
\tableofcontents

\section{Introduction}
We assume the reader is familiar with tensor categories; many of the definitions omitted below can be found in \cite{MR3242743}.
Let $\Bbbk$ be a field, and let $\cC$ be a Cauchy complete\footnote{A linear category is called \emph{Cauchy complete} if it admits direct sums and is idempotent/Karoubi complete.}
$\Bbbk$-linear braided monoidal category with finite dimensional morphism spaces and $\End(\mathbbm{1})=\Bbbk$. 
We say that an indecomposable $X\in\cC$ is \emph{non-negligible} if there exists $Y\in\cC$ such that $\mathbbm{1}$ is a direct summand in $Y\otimes X$.\footnote{Clearly, if such a $Y$ exists, it can be chosen indecomposable.} 
We say that $X$ has \emph{moderate growth} if there exists $n\in \Bbb N$ such that $\dim\End(X^{\otimes n})<n!$.\footnote{If $\mathcal{C}$ is rigid, symmetric, and abelian, this is equivalent to the usual definition of moderate growth, saying that the length of $X^{\otimes n}$ grows exponentially with $n$. 
Indeed, if $\dim\End(X^{\otimes n})<n!$, then the natural map $\Bbbk S_n\to \End(X^{\otimes n})$ is not injective. 
So if $\operatorname{char}(\Bbbk)=0$, then by Schur-Weyl duality, there exists a Schur functor which annihilates $X$. Thus by Deligne's theorem \cite{MR1944506}, $X$ generates a super-Tannakian category, hence has moderate growth. 
On the other hand, if $\operatorname{char}(\Bbbk)>0$, this follows from \cite[Prop.~4.7(5),(6)]{MR4564264}.}
Our main result, proved in \S\ref{sec:t1}, is:

\begin{thm}
\label{thm:FactorialGrowth}
Every non-negligible object of moderate growth in $\cC$ is rigid.
\end{thm}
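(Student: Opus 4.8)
The plan is to manufacture a candidate dual for $X$ out of the non-negligibility data and then run a local-ring dichotomy. First I would fix an indecomposable $Y$ together with morphisms $c\colon\mathbbm{1}\to Y\otimes X$ and $e\colon Y\otimes X\to\mathbbm{1}$ with $e\circ c=\id_{\mathbbm{1}}$, and use the braiding to assemble candidate right-duality data $\mathrm{coev}:=\beta_{Y,X}\circ c\colon\mathbbm{1}\to X\otimes Y$ and $\mathrm{ev}:=e$. The associated first zigzag $\phi:=(\id_X\otimes\mathrm{ev})\circ(\mathrm{coev}\otimes\id_X)\in\End(X)$ need not equal $\id_X$; but $\End(X)$ is a finite-dimensional local $\Bbbk$-algebra (since $\cC$ is Cauchy complete with finite-dimensional Hom spaces and $X$ is indecomposable), so $\phi$ is either invertible or nilpotent.

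If $\phi$ is invertible, I would rescale $\mathrm{coev}$ to $(\phi^{-1}\otimes\id_Y)\circ\mathrm{coev}$ so that the first zigzag identity holds; a standard string-diagram computation then shows that $q:=(\mathrm{ev}\otimes\id_Y)\circ(\id_Y\otimes\mathrm{coev})\in\End(Y)$ is idempotent, and splitting $q$ via Cauchy completeness and restricting $\mathrm{ev},\mathrm{coev}$ to the summand where $q$ acts as the identity yields honest right-duality data, so $X$ is rigid. (Should one instead be led to invertibility of the other zigzag $\chi\in\End(Y)$, the symmetric move produces an idempotent in $\End(X)$, which is $0$ or $\id_X$ by indecomposability of $X$; the value $0$ is impossible, since it would force $\mathrm{coev}=0$, hence $Y=0$, contradicting $\mathbbm{1}\hookrightarrow Y\otimes X$.)

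The crux is to exclude the possibility that $\phi$ is nilpotent, and this is the only place where moderate growth enters. I would prove the contrapositive: if $\phi$ is nilpotent, then $\dim\End(X^{\otimes n})\ge n!$ for every $n$, contradicting the hypothesis that $\dim\End(X^{\otimes n_0})<n_0!$ for some $n_0$. For $w\in S_n$, let $\widehat w\in\End(X^{\otimes n})$ be the image of the positive permutation braid lifting $w$ (the product, along a reduced word for $w$, of the braidings of adjacent tensor factors), possibly decorated by insertions of $c$ and $e$. I would show the $n!$ endomorphisms $\{\widehat w : w\in S_n\}$ linearly independent by a closing-up argument: from $c^{\otimes n}$, $e^{\otimes n}$, and the braiding one builds, for each $\sigma\in S_n$, a linear functional $\tau_\sigma\colon\End(X^{\otimes n})\to\End(\mathbbm{1})=\Bbbk$ that caps the $n$ output strands against $n$ auxiliary $Y$-strands with the matching prescribed by $\sigma$; evaluating the resulting closed diagrams, the matrix $\big(\tau_\sigma(\widehat w)\big)_{\sigma,w\in S_n}$ should turn out to be triangular for the length order on $S_n$, with diagonal entries equal to $1$ (coming from diagrams that untangle into disjoint circles, each contributing $e\circ c=1$) and strictly off-diagonal entries controlled by positive powers of the nilpotent curl endomorphism $\phi$, which therefore cannot obstruct invertibility of the matrix.

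I expect the main obstacle to be exactly this last step — choosing the right family of (decorated) permutation braids $\widehat w$ and closing-up functionals $\tau_\sigma$ so that the ``triangular with nilpotent error'' picture genuinely holds — i.e.\ showing that the failure of rigidity (equivalently, nilpotency of every candidate zigzag) really does force the endomorphism algebras of all tensor powers of $X$ to be as large as in the generic Schur--Weyl situation. The remaining ingredients (the local-ring dichotomy, and the passage from an invertible zigzag to an idempotent and then, after splitting, to an honest dual) are routine.
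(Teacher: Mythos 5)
Your overall architecture matches the paper's: local-ring dichotomy for the zigzag $\phi$, contrapositive giving $\dim\End(X^{\otimes n})\ge n!$ via linear independence of permutation braids, and a closing-up pairing. The treatment of the invertible case is a workable variant (the paper uses indecomposability of $Y$ to see the second zigzag is a nonzero idempotent, hence $\id_Y$, rather than passing to a summand). But the heart of your argument has a real gap.

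The problem is in the last step, which you correctly flag as the crux. You propose that the matrix $\bigl(\tau_\sigma(\widehat w)\bigr)$ is ``triangular with diagonal $1$ and off-diagonal entries controlled by positive powers of the nilpotent curl $\phi$, which therefore cannot obstruct invertibility.'' This reasoning does not close. Once you close up a diagram, the entries $\tau_\sigma(\widehat w)$ live in $\End(\mathbbm{1})=\Bbbk$; a nonzero scalar is never nilpotent, so ``controlled by nilpotent powers'' carries no information about the entries. Moreover, there is no general reason for the closure of a nilpotent endomorphism against an arbitrary cap/cup to vanish --- indeed the paper points out (\S3.4, citing Deligne) that in categories of non-moderate growth one can have nilpotent endomorphisms of nonzero quantum trace; and here $X$ is not even rigid, so there is no canonical trace and the pairing is just some bilinear form. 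So neither exact vanishing nor triangularity of your matrix is justified, and the linear independence does not follow.

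What the paper does instead, and what is missing from your proposal, is two things. First, Lemma~\ref{lem:GoodRetract}: for any nilpotent subspace $N\subset\End(Z)$ one can choose the retract $r\colon Z\to\mathbbm{1}$ so that $rNa=0$ for every $a\colon\mathbbm{1}\to Z$; applied to $Z=X\otimes Y$ and $N=R(X)\otimes\id_Y$, this is exactly what forces the off-diagonal pairings to be zero rather than merely ``small.'' Second, a precise combinatorial lemma (Lemma~\ref{lem:s!=t}) writing $s^{-1}t=u\,\sigma_{k-1}^{\pm1}\,v$ with $u,v\in B_{k-1}$, which isolates a single self-crossing in the closed diagram and shows, via naturality of the braiding, that $\Phi(f_s^{-1},f_t)$ has the form $r\,(\gamma_\pm\otimes\id_Y)\,a$ with $\gamma_\pm\in R(X)$, hence lies in $rNa=0$. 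With these two ingredients the pairing matrix $\Phi(f_s^{-1},f_t)=\delta_{s,t}$ is the identity, not merely invertible. Your proposal has neither the specially-chosen retract nor the single-crossing factorization, and the ``nilpotent error'' heuristic cannot substitute for them.
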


We say $\cC$ has \emph{moderate growth} if all indecomposable $X\in\cC$ have moderate growth. 
Thus we get:

\begin{cor}\label{coro2}
If $\cC$ has moderate growth, then every non-negligible $X\in\cC$ is rigid.
\end{cor}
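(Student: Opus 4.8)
The plan is to deduce the corollary directly from Theorem~\ref{thm:FactorialGrowth}, with essentially no additional argument. First I would recall the two relevant definitions precisely. A non-negligible object $X\in\cC$ is by construction \emph{indecomposable} (the definition quantifies over indecomposable $X$ admitting some $Y$ with $\mathbbm 1$ a direct summand of $Y\otimes X$). And $\cC$ is said to have moderate growth precisely when \emph{every} indecomposable object of $\cC$ has moderate growth, i.e.\ satisfies $\dim\End(X^{\otimes n})<n!$ for some $n\in\Bbb N$. Combining these: if $\cC$ has moderate growth and $X\in\cC$ is non-negligible, then $X$ is indecomposable, hence $X$ has moderate growth, hence $X$ is a non-negligible object of moderate growth, so Theorem~\ref{thm:FactorialGrowth} applies verbatim and gives that $X$ is rigid. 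Since $X$ was an arbitrary non-negligible object, this is exactly the assertion.

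There is no genuine obstacle here; the entire content of the corollary sits in Theorem~\ref{thm:FactorialGrowth}. The one point worth flagging in the write-up is that ``$\cC$ has moderate growth'' is a statement about indecomposables, so one must invoke that non-negligibility entails indecomposability before quoting the theorem — after that, nothing remains to prove.

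For context (not needed for the statement as worded), I would also observe that in a Cauchy complete braided category with finite-dimensional morphism spaces the class of rigid objects is closed under tensor products, direct sums, and retracts, and every object is a finite direct sum of indecomposables. Hence if, moreover, \emph{every} indecomposable object of $\cC$ is non-negligible, Corollary~\ref{coro2} upgrades to the assertion that $\cC$ itself is rigid; this is the form in which the result is used in the applications described in the introduction.
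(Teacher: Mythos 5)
Your proof is correct and matches the paper's (implicit) derivation: since non-negligible objects are by definition indecomposable, the hypothesis that $\cC$ has moderate growth gives $\dim\End(X^{\otimes n})<n!$ for some $n$, and Theorem~\ref{thm:FactorialGrowth} applies directly. Nothing more is needed.
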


Recall from \cite{MR3134025} that an \emph{r-category} is a monoidal category $\cC$ such that
for every $X\in\cC$, 
the functor
$Z\mapsto\Hom(Z\otimes X,\mathbbm{1})$ is representable by some object $X^*$, and the functor $*:\cC\to \cC^{\rm mop}$ (monoidal and arrow opposite) is an equivalence.
Note that $r$-categories are particular examples of \emph{Grothendieck-Verdier} categories, also known as \emph{$*$-autonomous} categories \cite{MR550878}.
Up to taking opposites, this is exactly the notion of \emph{weak rigidity} in \cite[Def.~5.3.4]{MR1797619}.\footnote{\label{footnote:WeaklyRigidNeqRCat}As a monoidal category may not be equivalent to its opposite, 
not all r-categories are weakly rigid, and vice versa.
We thank a referee for pointing out that the category of left $D$-modules over the dual numbers $D=\bbC[x]/(x^2)$ with the balanced tensor product over $D$ is an r-category by \cite{MR4886205}, but it is not weakly rigid.
Indeed, if ${}_DM$ is not flat, then
$$
\Hom(\mathbbm{1}, M\otimes - ) = \Hom_D(D, M\otimes_D -) \cong \Hom_{\bbC}(\bbC\to M\otimes_D -)
$$
is not exact and thus not representable. 

However, for a semisimple category, both notions are equivalent, as they are just a property of the fusion ring: there is a self-bijection $*$ of its basis $B$ such that for $a,b\in B$, the multiplicity of $1$ in $ab$ is $\delta_{a,b^*}$. 
Moreover, this property is implied by the fusion ring having duality, i.e., being a based ring as in \cite[Def.~3.1.3]{MR3242743}.
}
Clearly, in a semisimple braided r-category with simple $\mathbbm{1}$, every simple object is non-negligible.\footnote{If $Y=X^*$, then 
the morphism 
$Y\otimes X\to \mathbbm{1}$ 
corresponding to ${\rm id}_{X^*}$ 
is nonzero; hence $\mathbbm{1}$ is a direct summand of $Y\otimes X$.} 
Thus we obtain:

\begin{cor}
\label{cor:rCat}
Every semisimple braided r-category of moderate growth is rigid.\footnote{More generally, this corollary holds in a braided ring category (i.e., not necessarily semisimple, but abelian with biexact tensor product, see \cite[Def.~4.2.3]{MR3242743}) of moderate growth if every object is a quotient of a direct sum of tensor products of non-negligible objects. 
Indeed, this follows from Corollary \ref{coro2} and \cite[Cor.~2.36]{MR4533718}.}
\end{cor}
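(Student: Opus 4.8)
The plan is to deduce this directly from Corollary~\ref{coro2}, so the only work is to verify its hypotheses for every simple object of $\cC$. First I would note that, since $\cC$ is semisimple with $\End(\mathbbm{1}) = \Bbbk$, the unit $\mathbbm{1}$ has no nontrivial idempotent endomorphisms, hence is indecomposable, hence simple.

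The main new point is to show that every simple $X \in \cC$ is non-negligible. I would argue as in the footnote preceding the corollary: since $\cC$ is an $r$-category there is an object $X^*$ together with a natural isomorphism $\Hom(Z, X^*) \cong \Hom(Z \otimes X, \mathbbm{1})$, and since $* : \cC \to \cC^{\mathrm{mop}}$ is an equivalence we have $X^* \neq 0$ because $X \neq 0$. Taking $Z = X^*$, the nonzero endomorphism $\id_{X^*}$ corresponds to a nonzero morphism $\ev : X^* \otimes X \to \mathbbm{1}$. As $\mathbbm{1}$ is simple, $\ev$ is an epimorphism, and as $\cC$ is semisimple it splits; hence $\mathbbm{1}$ is a direct summand of $X^* \otimes X$, so $X$ is non-negligible with witness $Y = X^*$.

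Since $\cC$ has moderate growth, Corollary~\ref{coro2} then gives that every simple object of $\cC$ is rigid. To finish I would use that rigidity is closed under finite direct sums: if $X_1, \dots, X_k$ are rigid, then $\bigoplus_i X_i$ is rigid, with evaluation and coevaluation morphisms assembled blockwise from those of the $X_i$ and the zigzag identities checked componentwise. As every object of the semisimple category $\cC$ is a finite direct sum of simples, every object is rigid, i.e.\ $\cC$ is rigid.

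The whole argument is short; the only step that takes any thought is the non-negligibility of simple objects, and there the one thing to be careful about is the non-vanishing $X^* \neq 0$ — this is exactly where the hypothesis that $*$ is an equivalence is used, as opposed to merely the existence of the representing objects $X^*$. (The same reasoning, combined with \cite[Cor.~2.36]{MR4533718}, handles the more general ring-category statement in the footnote to the corollary.)
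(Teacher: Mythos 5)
Your proof is correct and takes essentially the same route as the paper, which only sketches the key point in a footnote: since $*$ is an equivalence, $X^*\neq 0$ for simple $X\neq 0$, so the nonzero map $X^*\otimes X\to\mathbbm{1}$ corresponding to $\id_{X^*}$ splits by semisimplicity, giving non-negligibility of all simples, and then Corollary~\ref{coro2} plus closure of rigidity under direct sums finishes. You are right to flag that the equivalence hypothesis on $*$ is exactly what rules out $X^*=0$; that is the one substantive point, and you handle it correctly.
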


There are several applications of our results.
The first is a simplification of the proof that the representation categories of certain vertex operator algebras (VOAs) are rigid \cite{MR2175093,MR2387861,MR2468370}.
Indeed, before the proof of rigidity in \cite[\S3]{MR2468370}, 
the main assumptions on the VOA $V$ already imply that that $\Rep(V)$ is a finite semisimple 
braided r-category, and thus our Corollary \ref{cor:rCat} directly applies.

Second, given a finite split\footnote{Here, \emph{split} means that for simple $X\in \cC$, $\End(X)=\Bbbk$, which is automatic if $\Bbbk$ is algebraically closed.} semisimple category $\cC$, the data of a $\cC$-modular functor is essentially equivalent\footnote{\label{footnote:SquareRoot}One must also choose a square root of the central charge.
We refer the reader to \cite[\S1.3]{1509.06811} for a more detailed discussion.} 
to the data of non-degenerate \emph{weak ribbon structure} on $\cC$ \cite[Def.~5.3.5, Thm.~5.7.10]{MR1797619},
which implies that $\cC$ is a finite split semisimple braided r-category with simple $\mathbbm{1}$. It was left open in \cite[Rem.~5.3.7]{MR1797619} whether every such $\cC$ is a modular fusion category, hinging on whether weak rigidity implies rigidity. 
Since finite semisimple monoidal categories have moderate growth, we answer this question affirmatively:
\begin{cor}
\label{cor:ModularFunctor}
Given a finite split semisimple category $\cC$, the data of a $\cC$-modular functor in the sense of \cite{MR1797619} is essentially equivalent\,\textsuperscript{\textup{\ref{footnote:SquareRoot}}} to a modular fusion category structure on $\cC$.\footnote{While this article was being prepared, Andr\'e Henriques informed us that he can use Huang's argument to prove Corollary \ref{cor:ModularFunctor} using modular functors.}
\end{cor}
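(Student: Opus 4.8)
The plan is to interpolate between the two kinds of data using the structure theory of \cite{MR1797619} together with Corollary~\ref{cor:rCat}. One direction is already classical: a modular fusion category structure on $\cC$ (plus a choice of square root of the central charge) is in particular a non-degenerate \emph{weak ribbon structure} in the sense of \cite[Def.~5.3.5]{MR1797619}, and the Reshetikhin--Turaev/Bakalov--Kirillov construction of \cite[Ch.~5]{MR1797619} produces from such data a $\cC$-modular functor. So the real content is the reverse direction together with the assertion that the two passages are mutually inverse.

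For the reverse direction, start with a $\cC$-modular functor. By \cite[Thm.~5.7.10]{MR1797619} this data is equivalent, up to the choice of a square root of the central charge, to a non-degenerate weak ribbon structure on $\cC$; in particular it equips $\cC$ with a braiding, a weak duality, and a twist, making $\cC$ a finite split semisimple braided $r$-category with $\mathbbm{1}$ simple. Now invoke Corollary~\ref{cor:rCat}: a finite semisimple monoidal category has moderate growth (the multiplicities in $X^{\otimes n}$ are entries of the $n$-th power of a fixed finite nonnegative integer matrix, hence grow at most exponentially, so $\dim\End(X^{\otimes n})<n!$ for $n\gg 0$), so $\cC$ is rigid. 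The weak duality is therefore a genuine two-sided duality.

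It remains to check that, once $\cC$ is rigid, a non-degenerate weak ribbon structure is literally a modular fusion category structure. The axioms for a weak ribbon category in \cite[Def.~5.3.5]{MR1797619} are those of a ribbon category with rigidity weakened to weak rigidity; since rigidity now holds, the twist $\theta$ satisfies the honest ribbon compatibilities — its interaction with duality being the specialization of the corresponding weak axiom to genuine evaluation and coevaluation maps — so $\cC$ is a ribbon fusion category. Finally, the non-degeneracy condition of \cite[Def.~5.3.5]{MR1797619} is precisely non-degeneracy of the braiding, i.e.\ invertibility of the $S$-matrix, which together with the ribbon structure is the definition of a modular fusion category. Running through \cite[Thm.~5.7.10]{MR1797619} in both directions shows the two constructions are mutually inverse up to the central-charge square root, giving the stated essential equivalence.

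The only step where something genuinely new is needed is the rigidity of $\cC$, and that is exactly what Corollary~\ref{cor:rCat} (equivalently Theorem~\ref{thm:FactorialGrowth}) supplies: it is the implication ``weak rigidity $\Rightarrow$ rigidity'' left open in \cite[Rem.~5.3.7]{MR1797619}. I expect everything else to be routine but somewhat tedious bookkeeping — matching each weak-ribbon axiom with its rigid counterpart, identifying the two notions of non-degeneracy, and propagating the unavoidable square-root-of-central-charge ambiguity already present in \cite[Thm.~5.7.10]{MR1797619}.
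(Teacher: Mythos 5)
Your argument is correct and follows essentially the same route as the paper: invoke \cite[Thm.~5.7.10]{MR1797619} to translate a modular functor into a non-degenerate weak ribbon structure (hence a finite split semisimple braided r-category), observe that finite semisimple monoidal categories have moderate growth, and apply Corollary~\ref{cor:rCat} to upgrade weak rigidity to rigidity, at which point the weak ribbon structure becomes an honest modular fusion category structure. The paper presents this more tersely in the introduction rather than as a standalone proof, but the content and the key step — rigidity via Corollary~\ref{cor:rCat} — are identical.
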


A third application concerns semisimplification; here, we assume that $\Bbbk$ is algebraically closed.\footnote{Corollaries \ref{cor:Rigidnon-negligibleTempered} and \ref{semisimp} can be extended to general fields, not necessarily algebraically closed. 
However, in this case, the dimension $\Tr_X(\id_X)$ of a non-negligible object can be zero.
For example, the two dimensional irreducible representation $V\in \Rep_{\bbF_2}(\bbZ/3)$
consisting of functions $\bbZ/3\to \bbF_2$ whose values sum to zero
satisfies $\End(V)\cong \bbF_4$, and the quantum trace equals the usual field-theoretic trace, which is zero on $\bbF_2$ and $1$ on $\bbF_4\setminus \bbF_2$.
} 
For rigid $X\in \cC$, let $\Tr_X$ denote the quantum trace constructed from the Drinfeld isomorphism (see \eqref{eq:Trace} below). We first show that $X$ is non-negligible iff $\Tr_X\ne 0$, which is the traditional definition of non-negligibility. 
In particular, direct sums of negligible objects form a thick tensor ideal $\mathcal I\subset \cC$: if $X\in \cC$ and $Z\in \mathcal I$ then $X\otimes Z\in \mathcal I$. Hence we can define 
the Green ring $K(\cC):={\rm Green}(\cC/\mathcal I)$ whose $\Bbb Z_+$-basis is formed by the isomorphism classes of non-negligible objects of $\cC$ and multiplication is defined by the tensor product. 

Next, the proof of Theorem \ref{thm:FactorialGrowth} can be adapted (see \S\ref{c5}) to establish the following characterization of the nilradical $R(X)\subset \End(X)$ for non-negligible $X\in\cC$ of moderate growth:\footnote{The result \cite[Thm.~10.10]{2012.15703} implies our Corollary \ref{cor:Rigidnon-negligibleTempered} for symmetric categories in characteristic zero, subject to some additional moderate conditions.}
\begin{cor}
\label{cor:Rigidnon-negligibleTempered}
If $X$ is non-negligible of moderate growth (hence rigid by Theorem \ref{thm:FactorialGrowth}), 
then $R(X)=\ker(\Tr_X)$. In other words, the quantum trace of 
every nilpotent endomorphism 
of $X$ is zero, but $\Tr_X(\id_X)\ne 0$.
\end{cor}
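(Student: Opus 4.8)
The plan is to deduce the statement from Theorem~\ref{thm:FactorialGrowth} together with an adaptation of the argument that proves it.

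First I would record the structural reduction. By Theorem~\ref{thm:FactorialGrowth}, $X$ is rigid, so the Drinfeld trace $\Tr_X\colon\End(X)\to\End(\mathbbm 1)=\Bbbk$ is defined. Since $\Bbbk$ is algebraically closed and $X$ is indecomposable, $A:=\End(X)$ is a finite-dimensional local $\Bbbk$-algebra with $A/R(X)\cong\Bbbk$; hence $A=\Bbbk\,\id_X\oplus R(X)$ as a $\Bbbk$-vector space, $R(X)$ is precisely the set of nilpotent endomorphisms of $X$, and $\dim R(X)=\dim A-1$. As $X$ is non-negligible, $\Tr_X\neq 0$ by the equivalence established above, so $\ker\Tr_X$ is a hyperplane. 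Thus it is enough to prove the inclusion $R(X)\subseteq\ker\Tr_X$, i.e.\ that $\Tr_X$ kills every nilpotent endomorphism of $X$: a dimension count then gives $\ker\Tr_X=R(X)$, and since $\id_X\notin R(X)$ this also yields $\Tr_X(\id_X)\neq 0$. The whole problem is therefore the vanishing of $\Tr_X$ on nilpotents.

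The core mechanism, which is the analogue in this setting of the step in the proof of Theorem~\ref{thm:FactorialGrowth} where moderate growth forces the ``zigzag error'' to be invertible rather than merely non-nilpotent, concerns \emph{square-zero} endomorphisms. Suppose $f\in\End(X)$ satisfies $f^{2}=0$ and $\Tr_X(f)\neq 0$; rescaling, I may take $\Tr_X(f)=1$. For each $n$ let $\rho\colon B_n\to\Aut_\cC(X^{\otimes n})$ be the braid-group action afforded by the braiding of $\cC$, let $\hat\sigma\in B_n$ be the positive permutation lift of $\sigma\in S_n$, and put $e_\sigma:=\rho(\hat\sigma)\circ f^{\otimes n}\in\End(X^{\otimes n})$. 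Cyclicity and multiplicativity of the Drinfeld trace, together with the free sliding of decorations along strands (naturality of the braiding), give
\[
\Tr_{X^{\otimes n}}\!\bigl(e_\sigma\circ\rho(\hat\tau)^{-1}\bigr)=\Tr_{X^{\otimes n}}\!\bigl(\rho(\hat\tau^{-1}\hat\sigma)\circ f^{\otimes n}\bigr),
\]
the quantum invariant of the closure of $\hat\tau^{-1}\hat\sigma$ decorated by one copy of $f$ on each strand. For $\sigma=\tau$ the braid is trivial and the value is $\Tr_X(f)^{n}=1$; for $\sigma\neq\tau$ the underlying permutation is nontrivial, so some connected component of the closure passes through $\geq 2$ strands, carries $\geq 2$ copies of $f$, and after sliding them together contains the composite $f^{2}=0$, making the invariant zero. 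Hence the matrix $\bigl[\Tr_{X^{\otimes n}}(e_\sigma\circ\rho(\hat\tau)^{-1})\bigr]_{\sigma,\tau\in S_n}$ is the identity, so the $e_\sigma$ are linearly independent and $\dim\End(X^{\otimes n})\geq n!$ for every $n$, contradicting moderate growth. The same argument, applied with any object of $\cC$ in place of $X$, shows that $\Tr_W$ annihilates every square-zero endomorphism of every object $W$ in the monoidal subcategory $\langle X\rangle$ generated by $X$, since all such $W$ again have moderate growth (by the characterization of moderate growth recalled in the footnote after its definition, using \cite{MR1944506} in characteristic zero and \cite{MR4564264} in positive characteristic).

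The part I expect to be the main obstacle is the passage from square-zero to arbitrary nilpotent endomorphisms. If $f^{m}=0$ with $m\geq 3$, the pairing matrix above is no longer diagonal: a $k$-cycle with $2\leq k<m$ contributes $\Tr_X(f^{k}\theta_X^{\,w})$, which one cannot yet show to vanish, so the simple independence argument must be supplemented. I would attack this by reducing to the square-zero case: given a nilpotent $f$ with $\Tr_X(f)\neq 0$, one wants to produce an object $W\in\langle X\rangle$ and a square-zero $g\in\End(W)$ with $\Tr_W(g)\neq 0$, contradicting the previous step; passing to summands of tensor powers of $X$ is permissible precisely because moderate growth is inherited by them. (Alternatively one could enlarge the family of $f$-decorated braid diagrams used above until the pairing matrix is forced to be invertible, but verifying this is the same difficulty in another guise.) Either way, this reduction is exactly the point at which moderate growth --- rather than mere non-negligibility of $X$ --- is used, paralleling how moderate growth is what upgrades ``non-nilpotent zigzag error'' to ``invertible zigzag error'' in the proof of Theorem~\ref{thm:FactorialGrowth}; it is the technical heart of the corollary.
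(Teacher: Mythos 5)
Your structural reduction (reduce to $R(X)\subseteq\ker\Tr_X$ because $\End(X)$ is local and $\Tr_X\neq 0$) agrees with the paper's, but the mechanism you propose for showing $\Tr_X$ kills nilpotents is genuinely different from the paper's, and it has a real gap, which you yourself flag: the passage from square-zero to arbitrary nilpotent endomorphisms. Neither of the remedies you sketch closes it. Trying to produce a square-zero $g$ on some $W\in\langle X\rangle$ with $\Tr_W(g)\neq 0$ is not obviously possible: in a local algebra the square-zero elements may all sit in high powers of $R(X)$, on which $\Tr_X$ could a priori vanish even though $\Tr_X(f)\neq 0$, and passing to summands of tensor powers supplies no mechanism for producing such a $g$. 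Enlarging the family of $f$-decorated braids runs precisely into the $k$-cycle obstruction you name. There is also a secondary concern in the square-zero case: the Drinfeld quantum trace is not multiplicative in a merely braided (non-ribbon) category, since $u_{X\otimes Y}=(u_X\otimes u_Y)\,c_{Y,X}\,c_{X,Y}$ inserts extra double braidings into the closure of $X^{\otimes n}$, so in general $\Tr_{X^{\otimes n}}(f^{\otimes n})\neq\Tr_X(f)^{n}$. To make that step correct you would want to close strands one at a time with a fixed cup and cap (a pairing like $\Phi$ in the paper) rather than invoke $\Tr_{X^{\otimes n}}$, and then re-examine the sliding of the $f$'s, which acquires $u_X$-corrections.

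The paper's proof sidesteps the whole nilpotency-order issue by modifying the cup and cap rather than decorating the strands. With $Y=X^*$, $r:=\ev_{X^*}(u_X\otimes\id_{X^*})$, and $N:=R(X)\otimes\id_{X^*}$, one takes $k$ minimal such that $rN^ka=0$ for all $a:\mathbbm 1\to X\otimes X^*$; this mimics Lemma~\ref{lem:GoodRetract}. The hypothesis $\Tr_X(f)\neq 0$ for some nilpotent $f$ is exactly the statement $rN\coev_X\neq 0$, which forces $k\geq 2$, so one can pick $g\in R(X)^{k-1}$ and $s_0$ with $r(g\otimes\id_{X^*})s_0=1$. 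Setting $r_0:=r(g\otimes\id_{X^*})$ gives three things: $r_0s_0=1$; $r_0Na\subseteq rN^ka=0$ for all $a$; and the zig-zag morphisms built from $r_0,s_0$ lie in $R(X)$ automatically, because they contain the nilpotent factor $g$. These are precisely the inputs that drive the proof of Theorem~\ref{thm:FactorialGrowth}, which then goes through unchanged to give $\dim\End(X^{\otimes n})\geq n!$ for all $n$. No restriction on the nilpotency order of $f$ ever enters, so the square-zero reduction your argument depends on simply never has to be made.
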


We also have the following standard lemma, proved in \S\ref{stlempf}.

\begin{lem}\label{stlem} 
If $\cC$ is rigid and the quantum trace of a nilpotent endomorphism of 
each non-negligible object of $\cC$ is zero, then 
this is so for any object of $\cC$.
\end{lem}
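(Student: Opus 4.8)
The plan is to reduce the statement about an arbitrary object to the known statement about non-negligible objects by using the Cauchy completeness of $\cC$ together with the structure theory of endomorphism rings in such categories. First I would recall that since $\cC$ is Cauchy complete with finite-dimensional morphism spaces and $\End(\mathbbm 1)=\Bbbk$, every object $X$ admits a decomposition $X\cong\bigoplus_i X_i^{\oplus m_i}$ into indecomposables, and each $\End(X_i)$ is a finite-dimensional local $\Bbbk$-algebra. An arbitrary nilpotent endomorphism $f\in\End(X)$ can be written as a matrix with entries in $\Hom(X_j,X_i)$, and the quantum trace (being a trace) satisfies $\Tr_X(f)=\sum_i \Tr_{X_i^{\oplus m_i}}(f_{ii})$, so it suffices to treat a single isotypic component $X_0^{\oplus m}$ with $X_0$ indecomposable, and there $\Tr$ of a nilpotent endomorphism reduces (via additivity of the trace over the matrix diagonal and cyclicity) to a sum of terms of the form $\Tr_{X_0}(g)$ with $g\in\End(X_0)$; crucially, the diagonal blocks $f_{kk}$ of a nilpotent matrix need not be nilpotent, so one must instead argue that each $g$ arising this way lies in the image of the nilradical under the natural maps, or handle it directly by the dichotomy below.

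The key dichotomy is: an indecomposable object $X_0$ is either non-negligible or negligible, i.e.\ $\Tr_{X_0}\equiv 0$ on $\End(X_0)$. In the non-negligible case, the hypothesis gives $\Tr_{X_0}(n)=0$ for all $n\in R(X_0)$, and since $\End(X_0)$ is local, $R(X_0)$ is its unique maximal ideal with $\End(X_0)/R(X_0)$ a division algebra; so for any $g\in\End(X_0)$ we have $g=\lambda\cdot\id+n$ modulo the semisimple part, and I would need to check that $\Tr_{X_0}$ is determined on all of $\End(X_0)$ by its value on $\id$ and its vanishing on $R(X_0)$ — this is where one uses that $\End(X_0)/R(X_0)$, being a finite-dimensional division algebra over $\Bbbk$ on which $\Tr_{X_0}$ descends to a trace, forces $\Tr_{X_0}$ to be a scalar multiple of the reduced trace, which already vanishes on commutators. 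In the negligible case there is nothing to prove for that component. Assembling: for the isotypic block $X_0^{\oplus m}$ with $X_0$ non-negligible, a nilpotent $f$ gives $\Tr(f)=\sum_k\Tr_{X_0}(f_{kk})$, and I would show $\sum_k f_{kk}$ lies in the commutator subspace plus $R(X_0)$ by a Cayley–Hamilton / nilpotency-of-the-matrix argument (the trace of a nilpotent matrix over any ring is a sum of commutators modulo the Jacobson radical), hence $\Tr_X(f)=0$.

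The main obstacle I anticipate is precisely this last bookkeeping step: controlling $\Tr_{X_0}$ on the non-semisimple ring $\End(X_0)$ and on matrix algebras over it, rather than on nilpotents alone. The clean way around it is to first prove the stronger local statement that for non-negligible indecomposable $X_0$ the quantum trace $\Tr_{X_0}\colon\End(X_0)\to\Bbbk$ is, up to a nonzero scalar, the composition $\End(X_0)\twoheadrightarrow\End(X_0)/R(X_0)\xrightarrow{\text{reduced trace}}\Bbbk$ (using Corollary \ref{cor:Rigidnon-negligibleTempered} to get $R(X_0)\subseteq\ker\Tr_{X_0}$ and non-negligibility to get $\Tr_{X_0}(\id)\ne0$, hence the induced functional on the division algebra is nonzero and thus proportional to the reduced trace); once this is in hand, the passage to direct sums and to matrix entries is the routine fact that the trace of a nilpotent matrix over a semisimple algebra vanishes, and additivity of $\Tr$ over direct summands finishes the proof. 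I would also note that rigidity of $\cC$ is used to ensure every object is rigid so that $\Tr_X$ is defined for all $X$, and that the hypothesis is being applied to each non-negligible indecomposable summand separately.
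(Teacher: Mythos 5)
Your plan is correct and follows essentially the same route as the paper's proof: decompose $X$ into isotypic components, observe that $\Tr_X$ factors through $\End(X)/R(X)\cong\bigoplus_i\End(V_i)$ because $\Tr_{X_i}$ vanishes on $R(X_i)$ for non-negligible $X_i$ by hypothesis (and $\Tr_{X_i}\equiv 0$ for negligible $X_i$), and then note that a nilpotent $f$ maps to a tuple of nilpotent matrices over $\Bbbk$, each of trace zero --- which also resolves, exactly as the paper does, the pitfall you correctly flagged that the diagonal blocks of $f$ themselves need not be nilpotent. Two small tightenings: you should invoke the lemma's own hypothesis (not Corollary~\ref{cor:Rigidnon-negligibleTempered}, which needs moderate growth and is not assumed here) to get $R(X_0)\subseteq\ker\Tr_{X_0}$, and since $\Bbbk$ is algebraically closed in this section we have $\End(X_0)/R(X_0)=\Bbbk$, so the reduced-trace and ``commutators modulo the radical'' machinery is unnecessary.
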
 

Thus, if $\mathcal{C}$ has moderate growth, then Corollary \ref{cor:Rigidnon-negligibleTempered} enables the semisimplification procedure of \cite[Thm.~2.6]{MR4486913}\footnote{More precisely, the semisimplification procedure of \cite[Thm.~2.6]{MR4486913}
is given in the case of pivotal categories, so works verbatim 
to establish Corollary \ref{semisimp} when $\cC$ is pivotal. 
In the absence of a pivotal structure, we should use a straightforward generalization of \cite[Thm.~2.6]{MR4486913}, which applies to rigid braided categories in which the quantum trace of any nilpotent endomorphism is zero. 
} 
to obtain a semisimple braided tensor category $\overline{\cC}$ of moderate growth. 
We get: 

\begin{cor}\label{semisimp} If $\cC$ has moderate growth, 
then $\cC$ admits a semisimplification $\overline{\cC}$, a semisimple braided tensor category of moderate growth whose Grothendieck ring is $K(\cC)$.
\end{cor}

In particular, applying the main results of \cite{MR1944506} in characteristic zero and of \cite{MR4564264} in positive characteristic, we obtain:

\begin{cor} If $\cC$ is symmetric, then the category $\overline{\cC}$ from Corollary \ref{semisimp} admits a fiber functor to $\operatorname{SuperVect}$ in characteristic zero and to the Verlinde category $\operatorname{Ver}_p$ in positive characteristic, and thus is equivalent to the representation category 
of a linearly reductive affine group scheme in $\operatorname{SuperVect}$, respectively $\operatorname{Ver}_p$. 
 \end{cor}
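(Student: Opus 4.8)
The plan is to deduce the statement by assembling Corollary~\ref{semisimp} with the two classification theorems of Deligne~\cite{MR1944506} (characteristic zero) and of~\cite{MR4564264} (positive characteristic); the corollary carries essentially no new content of its own, so the work is entirely in checking that $\overline{\cC}$ meets the hypotheses of those two black boxes.

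First I would verify that $\overline{\cC}$ is a semisimple \emph{symmetric} tensor category over the algebraically closed field $\Bbbk$, with finite dimensional Hom spaces and $\End(\mathbbm 1)=\Bbbk$, and of moderate growth. Every assertion here except the word ``symmetric'' is already part of Corollary~\ref{semisimp} (and $\End_{\overline{\cC}}(\mathbbm 1)=\Bbbk$ since $\mathbbm 1$ is simple over an algebraically closed field with finite dimensional endomorphism algebra). For symmetry: the semisimplification functor $F\colon\cC\to\overline{\cC}$ is braided monoidal, so the double braiding of $\overline{\cC}$ on a pair of objects $F(X),F(Y)$ is the image under $F$ of the double braiding of $\cC$, which is the identity because $\cC$ is symmetric; since $F$ is essentially surjective and the double braiding is a monoidal natural transformation, it follows that the braiding of $\overline{\cC}$ is a symmetry.

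With this in hand the classification theorems apply on the nose. In characteristic zero, $\overline{\cC}$ is a symmetric tensor category of moderate growth over an algebraically closed field of characteristic zero, so Deligne's theorem~\cite{MR1944506} produces a symmetric tensor (``fiber'') functor $\overline{\cC}\to\operatorname{SuperVect}$. In characteristic $p>0$, the analogous theorem of~\cite{MR4564264} gives a symmetric tensor functor $\overline{\cC}\to\operatorname{Ver}_p$ (semisimplicity of $\overline{\cC}$ guarantees that the target is $\operatorname{Ver}_p$ itself rather than an ind-completion). In either case, Tannakian reconstruction along this fiber functor identifies $\overline{\cC}$ with the category of representations of the affine group scheme of tensor automorphisms of the functor, an affine group scheme in $\operatorname{SuperVect}$, respectively in $\operatorname{Ver}_p$; and since $\overline{\cC}$ is semisimple, this group scheme is linearly reductive. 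The only step that is more than bookkeeping is the descent of the symmetric braiding to $\overline{\cC}$ described above, together with the inheritance of moderate growth, which is already supplied by Corollary~\ref{semisimp}; once these are noted, the statement is immediate.
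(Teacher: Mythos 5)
Your proposal is correct and follows the same route as the paper, which presents this corollary as an immediate consequence of Corollary~\ref{semisimp} combined with the main theorems of Deligne~\cite{MR1944506} and of~\cite{MR4564264}. The additional details you supply — that the symmetry of $\cC$ descends to $\overline{\cC}$ because the semisimplification functor is braided monoidal and essentially surjective, that semisimplicity gives Frobenius exactness so that~\cite{MR4564264} applies, and that semisimplicity of $\overline{\cC}$ forces the reconstructed group scheme to be linearly reductive — are exactly the bookkeeping the paper leaves implicit.
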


In characteristic zero, such semisimple symmetric categories are just representation categories of pro-reductive groups, and for characteristics $2,3$, they are representation categories of linearly reductive group schemes classified by Nagata's theorem, see \cite[Section 8]{MR4564264} 
(up to a super-twist of the symmetric structure outside characteristic $2$). 
In positive characteristics $p\ge 5$, there is a conjectural classification 
of them \cite[Conjecture 4.1]{MR4761534}. 
Thus the $\bbZ_+$-ring spanned by non-negligible summands in $X^{*\otimes m}\otimes X^{\otimes n}$, where $X$ is a non-negligible (hence rigid) object of any symmetric category $\cC$ of moderate growth, is very strongly constrained. 

As a fourth application, in \S\ref{sec:NonsemisimpleBraidedRCategories}, we apply our results to braided $r$-categories which are not necessarily semisimple.
Such categories arise in logarithmic conformal field theory.
The results in this section allow us to significantly simplify a number of arguments in \cite{MR1239507}.  

\section{Proofs}

\subsection{Auxiliary lemmas} 
Let $\cC$ be as in the last section.
By convention, we suppress all associators and unitors.
Let $X\in\cC$ be non-negligible, i.e., there is an indecomposable $Y\in\cC$ such that $\mathbbm{1}$ is a direct summand of $Y\otimes X\cong X\otimes Y$. We denote $X$ by an upwards oriented strand and $Y$ by a downwards oriented strand.
We begin with a well-known observation.

\begin{lem}
\label{lem:zig-zag}
If
$\tikzmath{
\draw[mid<] (0,0) arc(-180:0:.3cm);
}
\in \Hom(\mathbbm{1}, X\otimes Y )$
and
$\tikzmath{
\draw[mid<] (0,0) arc(180:0:.3cm);
}
\in \Hom(Y\otimes X, \mathbbm{1})$ are
such that
$z:=
\tikzmath{
\draw[mid>] (.6,-.6) -- (.6,0);
\draw[mid>] (.6,0) arc (0:180:.3cm); \draw[mid>] (0,0) arc(0:-180:.3cm);
\draw[mid>] (-.6,0) -- (-.6,.6);
}
$ 
is invertible, 
then $X,Y$ are rigid with ${}^*X\cong Y\cong X^*$.
\end{lem}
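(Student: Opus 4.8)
\emph{Proof plan.} Write $\coev\in\Hom(\mathbbm 1,X\otimes Y)$ and $\ev\in\Hom(Y\otimes X,\mathbbm 1)$ for the two morphisms in the statement, so that $z\colon X\to X$ is the indicated zig-zag composite built from $\coev$ and $\ev$. The plan is to first replace $\coev$ by a rescaled version so that the pair $(\coev,\ev)$ satisfies one of the two triangle identities on the nose, then to show that the remaining zig-zag $Y\to Y$ is an idempotent, use indecomposability of $Y$ to conclude it equals $\id_Y$ (so that $Y$ becomes an honest right dual $X^{*}$ of $X$), and finally invoke the braiding to produce left-duality data as well.

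For the normalization: $X$ is non-negligible, hence $X\neq 0$ (otherwise $\mathbbm 1$ would be a direct summand of the zero object, contradicting $\End(\mathbbm 1)=\Bbbk$), so the invertible endomorphism $z$ is nonzero. I would then replace $\coev$ by $(z^{-1}\otimes\id_Y)\circ\coev$, which leaves $\ev$ unchanged and makes the new zig-zag $X\to X$ equal $\id_X$.

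Now let $b:=(\ev\otimes\id_Y)\circ(\id_Y\otimes\coev)\colon Y\to Y$ be the other zig-zag. A short diagrammatic computation using only the interchange law and the triangle identity just arranged gives $\ev\circ(b\otimes\id_X)=\ev$, and substituting this into the definition of $b$ yields $b\circ b=b$. This is the crux, and it is where indecomposability of $Y$ is essential: $\End(Y)$ is a finite dimensional local algebra, so its only idempotents are $0$ and $\id_Y$, and $b=0$ is impossible since it would force $\ev=\ev\circ(b\otimes\id_X)=0$, hence $z=0$, which is absurd. (Indecomposability genuinely cannot be dropped: e.g.\ in $\Vect$ one may take $X=\Bbbk$, $Y=\Bbbk^{2}$ with $\coev,\ev$ using a single coordinate, so that $z=\id_X$ while $Y\not\cong X^{*}$.) Thus $b=\id_Y$, both triangle identities hold, and $Y$ is a right dual of $X$; in particular $X^{*}\cong Y$.

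Finally, since $\cC$ is braided, conjugating $\coev$ and $\ev$ with the braiding $\beta_{X,Y}$ produces coevaluation and evaluation maps exhibiting $Y$ as a left dual of $X$ as well — the standard argument that right and left duals agree in a braided category — so ${}^{*}X\cong Y$, $X$ is rigid, and hence so is its dual $Y$; altogether ${}^{*}X\cong Y\cong X^{*}$. The only non-formal ingredient is the use of indecomposability of $Y$ to force $b\in\{0,\id_Y\}$; everything else is bookkeeping with the triangle identities and the braiding, with the one caveat that the hypothesis supplies only invertibility of $z$, not $z=\id_X$, so the rescaling step cannot be skipped.
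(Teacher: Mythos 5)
Your proof is correct and follows essentially the same route as the paper's: normalize $\coev$ so that $z=\id_X$, show the other zig-zag $b=z'$ is a nonzero idempotent in the local ring $\End(Y)$ (hence $b=\id_Y$), and then use the braiding to supply the remaining duality data. The only cosmetic difference is in how you rule out $b=0$: the paper observes that $z'$ appears as a subdiagram of $z^2=\id_X\neq 0$, while you derive $\ev=\ev\circ(b\otimes\id_X)$ and note that $b=0$ would force $\ev=0$ and hence $z=0$; these are equivalent observations.
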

\begin{proof}
By rescaling, we may assume $z=\id_{X}$.
Then 
$
z':=
\tikzmath{
\draw[mid>] (.6,.6) -- (.6,0);
\draw[mid>] (.6,0) arc (0:-180:.3cm); \draw[mid>] (0,0) arc(0:180:.3cm);
\draw[mid>] (-.6,0) -- (-.6,-.6);
}
$
is a sub-diagram of 
$z^2=z=\id_X$, which implies
$z'\neq 0$.
Since $z'^2=z'$, $z'=\id_Y$ as $Y$ is indecomposable.
Using the braiding,
\[
\tikzmath{
\draw[mid>] (0,-.6) -- (0,.6);
}
=
\tikzmath{
\draw[mid>] (.6,-.6) -- (.6,0);
\draw[mid>] (.6,0) arc (0:180:.3cm); \draw[mid>] (0,0) arc(0:-180:.3cm);
\draw[mid>] (-.6,0) -- (-.6,.6);
}
=
\tikzmath{
\draw[mid>] (0,0) arc(0:180:.3cm);
\draw[mid>] (-.6,-1.8) -- (-.6,-.6);
\draw (-.6,-.6) to[out=90,in=-90] (0,0);
\draw (.6,-1.2) to[out=90,in=-90] (0,-.6);
\draw[knot,<-] (0,-.6) to[out=90,in=-90] (-.6,0);
\draw[knot] (0,-1.2) to[out=90,in=-90] (.6,-.6);
\draw[mid<] (0,-1.2) arc(-180:0:.3cm);
\draw[mid>] (.6,-.6) -- (.6,.6);
}
\qquad\qquad\text{and}\qquad\qquad
\tikzmath{
\draw[mid<] (0,-.6) -- (0,.6);
}
=
\tikzmath{
\draw[mid>] (.6,.6) -- (.6,0);
\draw[mid>] (.6,0) arc (0:-180:.3cm); \draw[mid>] (0,0) arc(0:180:.3cm);
\draw[mid>] (-.6,0) -- (-.6,-.6);
}
=
\tikzmath{
\draw[mid<] (0,0) arc(180:0:.3cm);
\draw[mid<] (.6,-1.8) -- (.6,-.6);
\draw (.6,-.6) to[out=90,in=-90] (0,0);
\draw[->] (-.6,-1.2) to[out=90,in=-90] (0,-.6);
\draw[knot] (0,-.6) to[out=90,in=-90] (.6,0);
\draw[knot] (0,-1.2) to[out=90,in=-90] (-.6,-.6);
\draw[mid>] (0,-1.2) arc(0:-180:.3cm);
\draw[mid<] (-.6,-.6) -- (-.6,.6);
}
\,.
\]
Thus $X$ is rigid with ${}^*X\cong Y\cong X^*$.
\end{proof}

For $W,Z\in\cC$, a morphism $r: W\to Z$ is called a \emph{retraction} or a \emph{split surjection} if it admits a \emph{splitting} $s: Z\to W$ such that $rs = \id_Z$. Clearly, the existence of a retraction $r:W\to Z$ is equivalent to $Z$ being a direct summand of $W$. 
Thus there exists a retraction $X\otimes Y\to \mathbbm{1}$. 
Next, we carefully choose one  via the following lemma.

\begin{lem}
\label{lem:GoodRetract}
Let $Z$ be any object containing $\mathbbm{1}$ as a direct summand, and $N\subset \End(Z)$ a nilpotent subspace, i.e., $N^n=0$ for some $n$. There exists a retraction $r: Z\to \mathbbm{1}$ such that for any $a: \mathbbm{1}\to Z$, we have $rNa=0$. 
\end{lem}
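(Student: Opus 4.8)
The plan is to set things up so that we can run an iterative "improvement" argument on retracts, using that $N$ is nilpotent to guarantee termination. Fix an idempotent $e\in\End(Z)$ realizing $\mathbbm{1}$ as a summand, i.e.\ factor $e = s_0 r_0$ with $r_0\colon Z\to\mathbbm{1}$, $s_0\colon\mathbbm{1}\to Z$, $r_0 s_0 = \id_\mathbbm{1}$. The set of retracts $r\colon Z\to\mathbbm{1}$ is a torsor-like set: any other retract has the form $r = r_0 + \varphi$ for suitable $\varphi\colon Z\to\mathbbm{1}$, but it is cleaner to parametrize splittings $s\colon\mathbbm{1}\to Z$ of a fixed $r$, or to conjugate $e$ by units of $\End(Z)$. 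I would phrase the improvement step in terms of replacing $(r,s)$ by $(r g^{-1}, g s)$ for $g\in\End(Z)$ a unit of the form $g = \id_Z + \nu$ with $\nu$ in the (two-sided) ideal generated by $N$; such $g$ is invertible since $\End(Z)$ is finite dimensional and we can arrange $\nu$ to be nilpotent or at least work modulo higher powers of the relevant ideal.

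The key computation is: given $r$, consider the composite obstruction map $\mathbbm{1}\xrightarrow{a} Z \xrightarrow{n} Z \xrightarrow{r}\mathbbm{1}$ ranging over $a\colon\mathbbm{1}\to Z$ and $n\in N$; since $\Hom(\mathbbm{1},Z)$ need not be controlled, it is better to note that "$rNa = 0$ for all $a$" is equivalent to "$rN$ kills the image of every $a\colon\mathbbm{1}\to Z$", and since $\mathbbm{1}$ is a summand via $s$, the worst case $a = s\lambda$ generates everything relevant — actually the condition $rNs = 0$ in $\End(\mathbbm{1}) = \Bbbk$ is the real target, together with $rN(Za) $ considerations. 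I would reduce to showing $r N s = 0$: pick the retract so that $rns = 0\in\Bbbk$ for all $n\in N$, then check this forces $rNa=0$ for general $a$ by writing $a = s(ra) + (\id_Z - sr)a$ and noting $(\id_Z - sr)a$ lands in the complementary summand, on which one argues separately or absorbs into $N$. The mechanism for achieving $rNs=0$: if $rNs\neq 0$, some $rn_0 s = c\neq 0$; replace $s$ by $s' = (\id_Z - \tfrac{1}{c} e' n_0 e')s$ type correction, or more robustly, conjugate $e$ to push the "bad part" of $N$ out of the way, decreasing a filtration degree. Because $N^n = 0$, after finitely many steps the obstruction lies in $N^n = 0$ and we are done.

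The main obstacle I anticipate is bookkeeping the filtration so that each correction genuinely decreases the power of $N$ in which the obstruction lives, rather than merely shuffling it around — one must check that the correction $g = \id_Z + \nu$ with $\nu\in N^k$ changes $rNs$ only by terms in $N^{k+1}$ (plus the intended cancellation of the leading term), which uses that $N$ is a subspace closed under the relevant multiplications, i.e.\ $N$ is a nilpotent \emph{subalgebra} or at least $N\cdot N\subseteq$ (span of products, still nilpotent). If $N$ is merely a nilpotent \emph{subspace} (not closed under multiplication), I would first replace $N$ by the subalgebra $\widetilde N$ it generates, which is still nilpotent (finite-dimensional, generated by nilpotents with $\widetilde N^{\,m}=0$ for some $m$ by a standard argument — though one must be slightly careful, as a space of nilpotents need not generate a nilpotent algebra in general; here finite-dimensionality of $\End(Z)$ and $N^n=0$ should be leveraged, possibly passing to the associative ideal $(N)$ and using that $rNa=0$ is implied by $r(N)a=0$ after absorbing $\id_Z$-free terms). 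Handling that reduction cleanly is the crux; once $N$ may be assumed a nilpotent two-sided ideal or subalgebra, the descending induction is routine.
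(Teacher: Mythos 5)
There is a genuine gap in the proposal. The central reduction you lean on --- that ``$rNs=0$ for the particular splitting $s$ forces $rNa=0$ for all $a:\mathbbm{1}\to Z$'' --- is false as stated. Writing $a = s(ra) + (\id_Z - sr)a$, the term $rNs(ra)$ vanishes, but $rN(\id_Z - sr)a$ has no reason to: $\Hom(\mathbbm{1},Z)$ can be higher-dimensional (e.g.\ $Z$ may contain several copies of $\mathbbm{1}$), so $a$ need not factor through $s$, and $N$ applied to $(\id_Z-sr)a$ can land back in the image of $s$. Your remark that one can ``argue separately or absorb into $N$'' is exactly where the difficulty lives and it is not resolved. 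The ancillary worry about $N$ being only a subspace is a non-issue (the hypothesis $N^n=0$ already refers to spans of products, so $N+N^2+\cdots$ is automatically a nilpotent subalgebra), and the machinery of conjugating the idempotent $e$ by units $\id_Z+\nu$ is over-engineered and never carried through.

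The paper's proof is much more direct and avoids the reduction entirely: let $k$ be \emph{minimal} such that there exists a retract $r$ with $rN^k a = 0$ for all $a$ (such $k\leq n$ exists since $N^n=0$). If $k\geq 2$, choose $a_0$, $x\in N$, $y\in N^{k-2}$ with $rxya_0 = 1$ (possible since $rN^{k-1}a\neq 0$ for some $a$, after rescaling). Then $r':=rx$ is itself a retract of $\mathbbm{1}$ (splitting $ya_0$), and $r'N^{k-1}a \subseteq rN^{k}a = 0$ for all $a$, contradicting minimality. The key move you are missing is that one improves the retract by \emph{post-composing $r$ with an element of $N$ itself}, rather than by correcting $s$ or conjugating $e$; this instantly drops the exponent by one, and nilpotence of $N$ provides both the base case and the guarantee that $rx$ is still a retract when needed.
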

\begin{proof}
Let $k\in\bbN$ be minimal for which there is a retraction $r: Z\to \mathbbm{1}$ such that for any $a: \mathbbm{1}\to Z$, $rN^k a=0$. 
(Since $N^n=0$, such a $k\leq n$ exists.)
Suppose for contradiction that $k\ge 2$.
Then there are $a_0: \mathbbm{1}\to Z$, $x\in N$, and $y\in N^{k-2}$ such that $rxya_0=1$. 
Thus $r':=rx$ is also a retraction, and for every 
$a: \mathbbm{1}\to Z$ we have $r'N^{k-1}a\subseteq rN^ka=0$, contradicting the minimality of $k$. 
Thus $k=1$.  
\end{proof}

Let $R(X)\subset \End(X)$ be the nilradical, and choose a retraction
$r=\tikzmath{
\draw[mid>] (0,0) arc(180:0:.3cm);
}
:
X\otimes Y\to \mathbbm{1}
$ and
splitting 
$\tikzmath{
\draw[mid<] (0,0) arc(-180:0:.3cm);
}
:\mathbbm{1}\to X\otimes Y$
satisfying Lemma \ref{lem:GoodRetract} for $Z=X\otimes Y$ and $N:=R(X)\otimes \id_Y\subset \End(X\otimes Y)$.
We prove the contrapositive of Theorem \ref{thm:FactorialGrowth}, i.e., if $X$ is not rigid, then $\dim \End(X^{\otimes n})\ge n!$ for all $n\in \Bbb N$.
Using the braiding, we get a retraction $Y\otimes X\to \mathbbm{1}$ with a splitting:
\begin{equation*}
\tikzmath{
\draw[mid<] (0,0) arc(180:0:.3cm);
}
:=  
\tikzmath{
\draw (0,0) to[out=90,in=-90] (.6,.6);
\draw[knot] (.6,0) to[out=90,in=-90] (0,.6);
\draw[mid>] (0,.6) arc(180:0:.3cm);
}
:Y\otimes X\to \mathbbm{1}
\qquad\qquad\text{and}\qquad\qquad
\tikzmath{
\draw[mid>] (0,0) arc(-180:0:.3cm);
}
:=  
\tikzmath{
\draw (.6,0) to[out=90,in=-90] (0,.6);
\draw[knot] (0,0) to[out=90,in=-90] (.6,.6);
\draw[mid<] (0,0) arc(-180:0:.3cm);
}
:\mathbbm{1}\to Y\otimes X\,.
\end{equation*}
Observe that if any of the following morphisms is invertible, then after redefining 
$\tikzmath{
\draw[mid<] (0,0) arc(180:0:.3cm);
}
:Y\otimes X\to \mathbbm{1}$, 
$X$ is rigid with ${}^*X\cong Y\cong X^*$ by Lemma \ref{lem:zig-zag}.
\begin{equation}
\label{eq:zig-zags}
z:=
\tikzmath{
\draw[mid>] (.6,-.6) -- (.6,0);
\draw[mid>] (.6,0) arc (0:180:.3cm); \draw[mid>] (0,0) arc(0:-180:.3cm);
\draw[mid>] (-.6,0) -- (-.6,.6);
}
\qquad\qquad
\gamma_+:=
\tikzmath{
\draw (.3,-.3) .. controls ++(180:.7cm) and ++(90:.3cm) .. (-.3,.6);
\draw[->] (.6,0) arc(0:-90:.3cm);
\draw[knot,->] (-.3,-.6) .. controls ++(90:.3cm) and ++(180:.5cm) .. (.3,.3);
\draw (.3,.3) arc(90:0:.3cm);
}
=
\tikzmath{
\draw[mid>] (0,0) arc(180:0:.3cm);
\draw[mid>] (.6,-1.2) -- (.6,-.6);
\draw (0,-.6) to[out=90,in=-90] (.6,0);
\draw[knot] (.6,-.6) to[out=90,in=-90] (0,0);
\draw[mid>] (0,-.6) arc(0:-180:.3cm);
\draw[mid>] (-.6,-.6) -- (-.6,.6);
}
\qquad\qquad
\gamma_-:=
\tikzmath{
\draw[->] (-.3,-.6) .. controls ++(90:.3cm) and ++(180:.5cm) .. (.3,.3);
\draw (.3,.3) arc(90:0:.3cm);
\draw[knot] (.3,-.3) .. controls ++(180:.7cm) and ++(90:.3cm) .. (-.3,.6);
\draw[->] (.6,0) arc(0:-90:.3cm);
}
=
\tikzmath{
\draw[mid>] (0,0) arc(180:0:.3cm);
\draw[mid>] (.6,-1.2) -- (.6,-.6);
\draw (.6,-.6) to[out=90,in=-90] (0,0);
\draw[knot] (0,-.6) to[out=90,in=-90] (.6,0);
\draw[mid>] (0,-.6) arc(0:-180:.3cm);
\draw[mid>] (-.6,-.6) -- (-.6,.6);
}
\end{equation}
Recall that since $X$ is indecomposable, the finite dimensional algebra $\End(X)$ is local, hence every element of $\End(X)$ is either invertible or nilpotent \cite[Cor.~4.8(b)]{MR2197389}.
We may thus assume that the morphisms in \eqref{eq:zig-zags} are all nilpotent, i.e., lie in $R(X)$.

Now consider the $n$-strand braid group
\[
B_n:=
\left\langle
\sigma_1,\dots, \sigma_{n-1}
\middle|
\begin{aligned}
\sigma_i \sigma_{i+ 1} \sigma_i 
&= 
\sigma_{i+ 1} \sigma_i \sigma_{i+ 1}
&&
\forall\, i
\\
\sigma_i \sigma_j
&=
\sigma_j\sigma_i
&&
j-i>1
\end{aligned}
\right\rangle
\qquad\qquad\qquad
\sigma_i
=
\begin{tikzpicture}[baseline=-.1cm]
\draw (0,-.5) -- (0,.5);
\node at (.35,0) {$\cdots$};
\draw (.6,-.5) -- (.6,.5);
\draw (1.2,-.5) to[out=90,in=-90] (.9,.5);
\draw[knot] (.9,-.5) node[below]{$\scriptstyle i$} to[out=90,in=-90] (1.2,.5);
\draw (1.5,-.5) -- (1.5,.5);
\node at (1.85,0) {$\cdots$};
\draw (2.1,-.5) node[below]{$\scriptstyle n$} -- (2.1,.5);
\end{tikzpicture}\,.
\]
Given an element of $B_n$, we number its strands from left to right on the bottom;
for example, $\sigma_i$ swaps the $i$-th and $i+1$-th strands.
The standard set embedding of $S_n$ into $B_n$ is given by taking a reduced word $w\in S_n$ and writing the same word in $B_n$.
In braid diagrams, these are the elements that can be written in the following form:
\begin{itemize}
\item 
any two distinct strands cross at most once, and
\item
if the $i$-th and $j$-th strands cross with $i<j$, then the $i$-th strand passes \emph{over} the $j$-th strand.
\end{itemize}
Every element of $S_n\subset B_n$ can be written uniquely as
$w_{n-1}w_{n-2}\cdots w_2w_1$ where for each index $1\leq j\leq n-1$, denoting multiple strands by thick colored strands,
\begin{equation}
\label{eq:WordsForSn}
w_j \in \{1,\sigma_{j}, \sigma_{j-1}\sigma_{j},\cdots, \sigma_{1}\cdots \sigma_{j-1}\sigma_{j}\}
=
\left\{\,\,
\tikzmath{
\draw[very thick, red] (-.6,-.3) -- (-.6,.3);
\draw (-.3,-.3) -- (-.3,.3);
\draw (0,-.3) node[below]{$\scriptstyle j{+}1$} -- (0,.3);
\draw[very thick] (.3,-.3) -- (.3,.3);
\node at (-.15,.5) {$\scriptstyle \id$};
}
\,\,,\,\,
\tikzmath{
\draw[very thick, red] (-.6,-.3) -- (-.6,.3);
\draw (0,-.3) node[below]{$\scriptstyle j{+}1$} to[out=90,in=-90] (-.3,.3);
\draw[knot] (-.3,-.3) to[out=90,in=-90] (0,.3);
\draw[very thick] (.3,-.3) -- (.3,.3);
\node at (-.15,.5) {$\scriptstyle \sigma_j$};
}
\,\,,\,\,
\tikzmath{
\draw[very thick, blue] (-.9,-.3) -- (-.9,.3);
\draw (0,-.3) node[below]{$\scriptstyle j{+}1$} to[out=90,in=-90] (-.6,.3);
\draw[knot] (-.6,-.3) to[out=90,in=-90] (-.3,.3);
\draw[knot] (-.3,-.3) to[out=90,in=-90] (0,.3) node[above]{$\scriptstyle \phantom 1$};
\draw[very thick] (.3,-.3) -- (.3,.3);
\node at (-.3,.5) {$\scriptstyle \sigma_{j-1}\sigma_j$};
}
\,\,,\,\,
\dots
\,\,,\,\,
\tikzmath{
\draw (0,-.3) node[below]{$\scriptstyle j{+}1$} to[out=90,in=-90] (-1.1,.3);
\draw[knot] (-1.1,-.3) node[below]{$\scriptstyle 1$} to[out=90,in=-90] (-.8,.3);
\node at (-.8,-.15) {$\scriptstyle\cdots$};
\node at (-.65,.15) {$\scriptstyle\cdots$};
\draw[knot] (-.6,-.3) to[out=90,in=-90] (-.3,.3);
\draw[knot] (-.3,-.3) to[out=90,in=-90] (0,.3) node[above]{$\scriptstyle \phantom 1$};
\draw[very thick] (.3,-.3) -- (.3,.3);
\node at (-.4,.5) {$\scriptstyle \sigma_1\cdots\,\sigma_{j-1}\sigma_j$};
}
\right\}.
\end{equation}
We identify $B_n,S_n$ with their respective images in $B_{n+1},S_{n+1}$ under adding a strand to the right.

\begin{lem}
\label{lem:s!=t}
Suppose $s,t\in S_n$ are distinct.
There are $u,v\in B_{n-1}\subset B_n$ such that $s^{-1}t = u\sigma_{n-1}^{j} v$ in $B_n$, where $j\in \{-1,0,+1\}$.
(However, $u,v\notin S_{n-1}$ necessarily.)
\end{lem}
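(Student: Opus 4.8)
The statement to prove is Lemma~\ref{lem:s!=t}: given distinct $s,t\in S_n$, we can write $s^{-1}t = u\sigma_{n-1}^{j}v$ in $B_n$ with $u,v\in B_{n-1}$ and $j\in\{-1,0,+1\}$. The natural approach is to use the normal form \eqref{eq:WordsForSn}: write $s = s_{n-1}s_{n-2}\cdots s_1$ and $t = t_{n-1}t_{n-2}\cdots t_1$ with $s_j,t_j$ in the stated list of ``descending runs''. Then $s^{-1}t = s_1^{-1}\cdots s_{n-1}^{-1}\, t_{n-1}t_{n-2}\cdots t_1$. The key observation is that the last factor $w_{n-1}$ in the normal form is the \emph{only} one that can involve $\sigma_{n-1}$, and it is $1$, $\sigma_{n-1}$, $\sigma_{n-2}\sigma_{n-1}$, $\dots$, or $\sigma_1\cdots\sigma_{n-1}$; all the other factors $w_j$ for $j\le n-2$ lie in $B_{n-1}$. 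So the plan is to analyze the ``middle'' product $s_{n-1}^{-1}t_{n-1}$, pushing everything else into $u$ and $v$.

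First I would handle the easy case: if $s_{n-1} = t_{n-1}$, then $s^{-1}t = (s_1^{-1}\cdots s_{n-2}^{-1})(t_{n-2}\cdots t_1)$ already lies in $B_{n-1}$ (and equals some element of $S_{n-1}$, though we don't need that), so $j=0$ works with $v=1$. Now suppose $s_{n-1}\ne t_{n-1}$; write $s_{n-1} = \sigma_a\sigma_{a+1}\cdots\sigma_{n-1}$ and $t_{n-1} = \sigma_b\sigma_{b+1}\cdots\sigma_{n-1}$ with $1\le a < b \le n$ (using the convention that the empty product is $1$, i.e.\ $a=n$ or $b=n$ is allowed, and WLOG $a<b$ by symmetry, swapping the roles of $s$ and $t$ and inverting at the end — note inverting preserves the desired form since $(u\sigma_{n-1}^j v)^{-1} = v^{-1}\sigma_{n-1}^{-j}u^{-1}$). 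The central calculation is then
\[
s_{n-1}^{-1}t_{n-1} = (\sigma_{n-1}^{-1}\cdots\sigma_{a+1}^{-1}\sigma_a^{-1})(\sigma_b\sigma_{b+1}\cdots\sigma_{n-1}).
\]
Here the leftmost letter is $\sigma_{n-1}^{-1}$ and the rightmost is $\sigma_{n-1}$; all the intermediate letters $\sigma_a^{-1},\dots,\sigma_{a+1}^{-1}$ and $\sigma_b,\dots,\sigma_{n-2}$ lie in $B_{n-1}$. So $s_{n-1}^{-1}t_{n-1} = \sigma_{n-1}^{-1}\,p\,\sigma_{n-1}$ with $p\in B_{n-1}$, and hence $s^{-1}t = (s_1^{-1}\cdots s_{n-2}^{-1})\sigma_{n-1}^{-1}\,p\,\sigma_{n-1}(t_{n-2}\cdots t_1)$. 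This is \emph{almost} of the desired form, but the prefix $s_1^{-1}\cdots s_{n-2}^{-1}\in B_{n-1}$ sits to the left of a $\sigma_{n-1}^{-1}$, which is fine — it just gets absorbed... wait, no: we want the form $u\sigma_{n-1}^j v$ with a \emph{single} power of $\sigma_{n-1}$, and here we have two occurrences ($\sigma_{n-1}^{-1}$ and $\sigma_{n-1}$) with $p\in B_{n-1}$ between them.

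So the main obstacle — and the real content of the lemma — is collapsing $\sigma_{n-1}^{-1}\,p\,\sigma_{n-1}$ (or the whole expression) down to a single power of $\sigma_{n-1}$. I would resolve this by a more careful choice of which letter to ``expose.'' Since $a<b$, in the product $\sigma_a^{-1}\sigma_{a+1}^{-1}\cdots\sigma_{n-1}^{-1}\cdot\sigma_b\cdots\sigma_{n-1}$ I should instead exploit that the $\sigma_{n-1}^{-1}$ from the left and the $\sigma_{n-1}$ from the right, after commuting past the letters that are ``far'' from index $n-1$, will meet and partially cancel; concretely, one checks by a direct braid-relation computation that $\sigma_a^{-1}\cdots\sigma_{n-1}^{-1}\cdot\sigma_b\cdots\sigma_{n-1} = (\text{word in }\sigma_{\le n-2})\cdot \sigma_{n-1}^{\pm 1}\cdot(\text{word in }\sigma_{\le n-2})$ when $a<b$, using $\sigma_{n-1}^{-1}\sigma_{n-2}^{-1}\sigma_{n-1}^{-1} = \sigma_{n-2}^{-1}\sigma_{n-1}^{-1}\sigma_{n-2}^{-1}$ to push one $\sigma_{n-1}$ through the other — the net effect being that the two $\sigma_{n-1}$'s collapse to one because the indices interleave. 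Then setting $u := (s_1^{-1}\cdots s_{n-2}^{-1})\cdot(\text{left word in }\sigma_{\le n-2})$ and $v := (\text{right word in }\sigma_{\le n-2})\cdot(t_{n-2}\cdots t_1)$, both in $B_{n-1}$, and $j := \pm 1$ gives the claim. I expect the bookkeeping in this final collapse — verifying that exactly one $\sigma_{n-1}$ survives, with a definite sign, in all the sub-cases $a<b\le n-1$, $a<b=n$ — to be the only delicate point; it is a short explicit braid manipulation, best done by induction on $b-a$ or by drawing the diagram.
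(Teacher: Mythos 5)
Your approach is essentially the paper's: reduce via the normal form \eqref{eq:WordsForSn} to the case $s=\sigma_i\cdots\sigma_{n-1}$, $t=\sigma_j\cdots\sigma_{n-1}$ with $i<j$ (absorbing the $S_{n-1}$ tails of $s,t$ into $u,v$), and then manipulate $s^{-1}t=\sigma_{n-1}^{-1}\cdots\sigma_i^{-1}\sigma_j\cdots\sigma_{n-1}$ to expose a single power of $\sigma_{n-1}$. You also correctly identify where the real content lies: a priori this word has two occurrences of $\sigma_{n-1}^{\pm1}$, and they must be merged into one. But you leave exactly that step as an assertion (``one checks by a direct braid-relation computation\dots'', ``I expect the bookkeeping\dots to be the only delicate point''), which is the lemma itself; the proposal stops short of proving it.

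The missing computation is short and is what the paper carries out, diagrammatically: for $i<j$,
\[
\sigma_{n-1}^{-1}\sigma_{n-2}^{-1}\cdots\sigma_i^{-1}\cdot\sigma_j\sigma_{j+1}\cdots\sigma_{n-1}
\;=\;
(\sigma_{j-1}\cdots\sigma_{n-2})\,\sigma_{n-1}^{-1}\,(\sigma_{n-2}^{-1}\cdots\sigma_i^{-1}),
\]
so one may take $u=\sigma_{j-1}\cdots\sigma_{n-2}\in B_{n-1}$, $v=\sigma_{n-2}^{-1}\cdots\sigma_i^{-1}\in B_{n-1}$, and $j=-1$ in the statement of the lemma. (This is precisely the diagrammatic or $b-a$-inductive braid-relation argument you gesture at, and it handles the ``empty run'' boundary cases automatically since the descending products degenerate correctly; e.g.\ for $n=3$, $i=1$, $j=2$ it reads $\sigma_2^{-1}\sigma_1^{-1}\sigma_2=\sigma_1\sigma_2^{-1}\sigma_1^{-1}$.) With this identity in hand, the rest of your bookkeeping is correct and the proof is complete.
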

\begin{proof}
If $s^{-1}t\in B_{n-1}$, we are finished.
Otherwise, since the $n$-th strands of $s,t$ pass behind all other strands, there are $s',t'\in S_{n-1}$ such that
$s= \sigma_i \sigma_{i+1}\cdots\sigma_{n-1}s'$
and
$t= \sigma_j \sigma_{j+1}\cdots\sigma_{n-1}t'$
with $i\neq j$.
Without loss of generality, we may assume 
$s=\sigma_i \sigma_{i+1}\cdots\sigma_{n-1}$
and
$t=\sigma_j \sigma_{j+1}\cdots\sigma_{n-1}$
with $i<j$.
The result now follows by inspection;
representing multiple strands by thick lines, we see
\begin{align*}
s^{-1}t
&=
\sigma_{n-1}^{-1}\cdots
\sigma_i^{-1}
\sigma_j\cdots\sigma_{n-1}
=
\tikzmath{
\draw[very thick, black] (-.6,-1) -- (-.6,1);
\draw (-.3,-1) node[below]{$\scriptstyle i$} -- (-.3,0) to[out=90,in=-90] (.6,1) node[above]{$\scriptstyle n$};
\draw[very thick, knot, red] (0,-1) -- (0,0) to[out=90,in=-90] (-.3,1);
\draw[knot] (.6,-1) node[below]{$\scriptstyle n$} to[out=90,in=-90] (.3,0) node[left, yshift=-.2cm, xshift=.1cm]{$\scriptstyle j$} to[out=90,in=-90] (0,1) node[above]{$\scriptstyle j-1$};
\draw[very thick, knot, blue] (.3,-1) to[out=90,in=-90] (.6,0) to[out=90,in=-90] (.3,1);
\draw[dashed, very thin] (-.8,0) -- (.8,0);
}
=
\tikzmath{
\draw[very thick, black] (-.6,-1) -- (-.6,1);
\draw (-.3,-1) node[below]{$\scriptstyle i$} to[out=90,in=-90] (.3,-.3) to[out=90,in=-90] (.6,.3) to[out=90,in=-90] (.6,1) node[above]{$\scriptstyle n$};
\draw[very thick, knot, red] (0,-1) to[out=90,in=-90] (-.3,-.3) to[out=90,in=-90] (-.3,1);
\draw[knot] (.6,-1) node[below]{$\scriptstyle n$} to[out=90,in=-90] (.6,-.3) to[out=90,in=-90] (.3,.3) to[out=90,in=-90] (0,1) node[above]{$\scriptstyle j-1$};
\draw[very thick, knot, blue] (.3,-1) to[out=90,in=-90] (0,-.3) to[out=90,in=-90] (0,.3) to[out=90,in=-90] (.3,1);
\draw[dashed, very thin] (-.8,-.3) -- (.8,-.3);
\draw[dashed, very thin] (-.8,.3) -- (.8,.3);
}
=
\underbrace{\sigma_{j-1}
\cdots 
\sigma_{n-2}}_{=:u\in B_{n-1}}
\sigma_{n-1}^{-1}
\underbrace{\sigma_{n-2}^{-1}
\cdots
\sigma_{i}^{-1}}_{=:v\in B_{n-1}}.
\qedhere
\end{align*}
\end{proof}

\subsection{Proof of Theorem \ref{thm:FactorialGrowth}}\label{sec:t1}
As above, we assume the morphisms in \eqref{eq:zig-zags} are nilpotent, i.e., lie in the nilradical $R(X)\subset \End(X)$.
For $t\in S_n$, let $f_t\in \End(X^{\otimes n})$ denote the corresponding morphism.
We claim that the set $\set{f_t}{t\in S_n}$ is linearly independent, which implies the result.

Consider the bilinear form
$\Phi: \End(X^{\otimes n})\times \End(X^{\otimes n}) \to \End(\mathbbm{1})=\Bbbk$
given by
\[
\Phi(x,y):=
\tikzmath{
\draw[mid>] (-.5,1.3) arc (180:0:1.3cm and .8cm);
\draw[mid>] (2.1,1.3) -- (2.1,-.3);
\draw[mid>] (2.1,-.3) arc (0:-180:1.3cm and .8cm);
\draw[mid>] (.5,1.3) arc (180:0:.3cm);
\draw[mid>] (1.1,1.3) -- (1.1,-.3);
\draw[mid>] (1.1,-.3) arc (0:-180:.3cm);
\draw[mid>] (.5,.3) -- (.5,.7);
\draw[mid>] (-.5,.3) -- (-.5,.7);
\roundNbox{}{(0,0)}{.3}{.4}{.4}{$y$}
\roundNbox{}{(0,1)}{.3}{.4}{.4}{$x$}
\node at (.05,.5) {$\cdots$};
\node at (.05,-.5) {$\cdots$};
\node at (.05,1.5) {$\cdots$};
\node at (1.65,.5) {$\cdots$};
}\,.
\]
For $s,t\in S_n$, we claim that $ \Phi(f_s^{-1},f_t) = \delta_{s,t}$, which immediately implies that $\set{f_t}{t\in S_n}$ is linearly independent:
\[
0
=
\sum_{t\in S_n} \lambda_t f_t
\qquad\Longrightarrow\qquad
0
=
\Phi\left( f_s^{-1}, \sum_{t\in S_n} \lambda_t f_t \right )
=
\sum_{t\in S_n} \lambda_t \delta_{s,t}
=
\lambda_s
\qquad
\forall s\in S_n.
\]
Clearly $\Phi(f_s^{-1}, f_s) = 1$ by construction.
If $s\neq t$, 
then writing $s=v_{n-1}\cdots v_{1}$ and $t=w_{n-1}\cdots w_{1}$ as in \eqref{eq:WordsForSn},
pick $k$ maximal such that $w_k\neq v_k$.
Then $s^{-1}t$ lies in the image of $S_k$ included into $S_n$,
but the $k$-th lower boundary point
of $s^{-1}t$
does not connect to the $k$-th upper boundary point.
By Lemma \ref{lem:s!=t}, there are $u,v\in B_{k-1}\subset B_n$ such that
$s^{-1}t=u\sigma_{k-1}^{\pm 1}v$.
Then $\Phi( f_s^{-1}, f_t)\in \End(\mathbbm{1})$
has $n-k$ closed loops which can be removed, but the $k$-th strand performs a self-crossing of the form
\[
\gamma_+=
\tikzmath{
\draw (.3,-.3) .. controls ++(180:.7cm) and ++(90:.3cm) .. (-.3,.6);
\draw[->] (.6,0) arc(0:-90:.3cm);
\draw[knot,->] (-.3,-.6) .. controls ++(90:.3cm) and ++(180:.5cm) .. (.3,.3);
\draw (.3,.3) arc(90:0:.3cm);
}
\qquad\qquad
\text{or}
\qquad\qquad
\gamma_-=
\tikzmath{
\draw[->] (-.3,-.6) .. controls ++(90:.3cm) and ++(180:.5cm) .. (.3,.3);
\draw (.3,.3) arc(90:0:.3cm);
\draw[knot] (.3,-.3) .. controls ++(180:.7cm) and ++(90:.3cm) .. (-.3,.6);
\draw[->] (.6,0) arc(0:-90:.3cm);
}
\,,
\]
which lie in $R(X)$ by assumption.
By naturality of the braiding, there is an $a: \mathbbm{1}\to X\otimes Y$ such that 
\[
\pushQED{\qed} 
\Phi(f_s^{-1},f_t) 
=
\tikzmath{
\draw[mid>] (-.3,.8) arc(180:0:.3cm);
\draw[mid>] (-.3,-.2) -- (-.3,.2);
\draw[mid<] (.3,-.2) -- (.3,.8);
\roundNbox{}{(0,-.5)}{.3}{.3}{.3}{$a$}
\roundNbox{}{(-.3,.5)}{.3}{0}{0}{$\gamma_\pm$}
}
\in 
rNa
\underset{\text{(Lem.~\ref{lem:GoodRetract})}}{=}
0.
\qedhere
\popQED
\]

\begin{rem} If $\cC$ is semisimple then $R(X)=0$, so 
Lemma~\ref{lem:GoodRetract} is not needed and the proof simplifies. 
\end{rem} 

\subsection{Non-negligible rigid objects} Let $\Bbbk$ be algebraically closed. 
It is well-known that when $\cC$ is braided and $X\in\cC$ is rigid, we 
have a distinguished quantum trace $\Tr_X: \End(X)\to \End(\mathbbm{1})\cong\Bbbk$
using the Drinfeld isomorphism $u_X: X\to X^{**}$:
\begin{equation}
\label{eq:Trace}
\Tr_X(f)
:=
\tikzmath{
\draw[mid>] (0,-.2) --node[left]{$\scriptstyle X$} (0,.2);
\draw[mid>] (0,.8) node[left, yshift=.2cm]{$\scriptstyle X^{**}$} arc (180:0:.3cm);
\draw[mid>] (.6,.8) --node[right]{$\scriptstyle X^*$} (.6,-.8);
\draw[mid>] (.6,-.8) arc (0:-180:.3cm) node[left, yshift=-.2cm]{$\scriptstyle X$};
\roundNbox{}{(0,.5)}{.3}{0}{0}{$u_X$}
\roundNbox{}{(0,-.5)}{.3}{0}{0}{$f$}
}
=
\tikzmath{
\draw (0,0) to[out=90,in=-90] (.6,.6);
\draw[knot] (.6,0) to[out=90,in=-90] (0,.6);
\draw[mid<] (0,0) arc(-180:0:.3cm);
\draw[mid<] (0,.6) --node[left]{$\scriptstyle X^{*}$} (0,1.2);
\draw[mid>] (.6,1.2) node[right, yshift=.2cm]{$\scriptstyle X$} arc(0:180:.3cm);
\roundNbox{}{(.6,.9)}{.3}{0}{0}{$f$}
}
\qquad\qquad
\text{where}
\qquad\qquad
u_X:=
\tikzmath{
\draw[->] (.3,-.6) node[right]{$\scriptstyle X$} .. controls ++(90:.3cm) and ++(0:.5cm) .. (-.3,.3);
\draw[knot] (-.3,-.3) .. controls ++(0:.7cm) and ++(90:.3cm) .. (.3,.6) node[right]{$\scriptstyle X^{**}$};
\draw[->] (-.6,0) node[left]{$\scriptstyle X^{*}$} arc(-180:-90:.3cm);
\draw (-.3,.3) arc(90:180:.3cm);
}\,.
\end{equation}
It is straightforward to verify that $\Tr_X(fg)=\Tr_X(gf)$ for all $f,g\in\End(X)$.

\begin{lem}
\label{lem:TraceNonzero} 
\mbox{}
\begin{enumerate}[label=(\roman*)]
\item
If X is rigid, then X is non-negligible if and only if there is an $f\in\End(X)$ such that $\Tr_X(f)$ is nonzero. In this case one may take $Y=X^*$ for the witness of non-negligibility of $X$.
\item
If $X$ is rigid indecomposable and ${\rm ker}(\Tr_X)=R(X)$ (i.e., the trace of a nilpotent endomorphism of $X$ is zero), and if $Y$ is indecomposable 
such that $Y\otimes X$ contains $\mathbbm{1}$ as a direct summand, then $Y\cong X^*$. 
\end{enumerate}
\end{lem}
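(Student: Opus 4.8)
The plan is to read everything off the factorization of the quantum trace supplied by the second picture in \eqref{eq:Trace}. That picture unwinds to
\[
\Tr_X(f)=\ev_X\circ(\id_{X^*}\otimes f)\circ c_{X,X^*}\circ\coev_X\qquad(f\in\End(X)),
\]
where $\coev_X\colon\mathbbm 1\to X\otimes X^*$ and $\ev_X\colon X^*\otimes X\to\mathbbm 1$ are the duality maps of the rigid object $X$ and $c_{X,X^*}\colon X\otimes X^*\to X^*\otimes X$ is the braiding (or its inverse, with whatever convention is used in \eqref{eq:Trace}). In particular $\Tr_X(f)$ is automatically a composite $\mathbbm 1\to X^*\otimes X\to\mathbbm 1$, and this is the only feature of $\Tr_X$ I will use.

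For the implication ``$\Tr_X\neq 0\Rightarrow X$ non-negligible'' in (i): if $\Tr_X(f)\neq 0$, then rescaling $\ev_X$ makes the displayed composite equal to $\id_{\mathbbm 1}$, so $\mathbbm 1$ is a direct summand of $X^*\otimes X$, i.e.\ $X$ is non-negligible with witness $Y=X^*$. For the converse and the last sentence of (i), take any $Y$ together with morphisms $r\colon Y\otimes X\to\mathbbm 1$ and $s\colon\mathbbm 1\to Y\otimes X$ satisfying $rs=\id_{\mathbbm 1}$. Using only the rigidity of $X$ and the braiding, I would pass $r$ and $s$ through the adjunction isomorphisms $\Hom(Y\otimes X,\mathbbm 1)\cong\Hom(Y,X^*)$ and $\Hom(\mathbbm 1,Y\otimes X)\cong\Hom(X^*,Y)$ to obtain $\bar r\colon Y\to X^*$ and $\bar s\colon X^*\to Y$ with $r=\ev_X\circ(\bar r\otimes\id_X)$ and $s=(\bar s\otimes\id_X)\circ c_{X,X^*}\circ\coev_X$, the second normalization being chosen to reproduce the braiding in the trace formula. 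Transposing $\bar r\bar s\in\End(X^*)$ across $\ev_X$ to an endomorphism $(\bar r\bar s)^{t}\in\End(X)$ (the unique one with $\ev_X\circ(g\otimes\id_X)=\ev_X\circ(\id_{X^*}\otimes g^{t})$) then gives
\[
1=rs=\ev_X\circ\big((\bar r\bar s)\otimes\id_X\big)\circ c_{X,X^*}\circ\coev_X=\Tr_X\big((\bar r\bar s)^{t}\big).
\]
Hence $f:=(\bar r\bar s)^{t}$ has $\Tr_X(f)\neq 0$, which proves the converse; and $Y=X^*$ is a valid witness by the first implication.

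For (ii) I would rerun exactly this computation and then invoke indecomposability. Since $X$ is indecomposable, $\End(X^*)$ is local (being anti-isomorphic to the local ring $\End(X)$), so $\bar r\bar s$ is either invertible or nilpotent. If it were nilpotent, then $(\bar r\bar s)^{t}$ would be a nilpotent endomorphism of $X$, hence an element of $R(X)=\ker(\Tr_X)$, contradicting $\Tr_X((\bar r\bar s)^{t})=1$. Therefore $\bar r\bar s$ is invertible, so $\bar r\colon Y\to X^*$ admits the right inverse $\bar s(\bar r\bar s)^{-1}$, exhibiting $X^*$ as a direct summand of $Y$; as $Y$ is indecomposable and $X^*\neq 0$, this forces $Y\cong X^*$.

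The one genuinely delicate step is the bookkeeping with left/right duals and braidings: one must fix the adjunction isomorphisms defining $\bar r$ and $\bar s$ with the \emph{same} braided normalization that occurs in \eqref{eq:Trace}, so that $rs$ reassembles into exactly $\Tr_X$ of some endomorphism of $X$ rather than a twisted variant; I expect this to require a couple of explicit string‑diagram manipulations. Everything else — the invertible/nilpotent dichotomy in a local endomorphism ring, transpose preserving nilpotence, and a nonzero retract of an indecomposable object being the whole object — is routine.
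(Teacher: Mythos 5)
Your argument is correct and follows essentially the same route as the paper's: unwind $\Tr_X$ to a composite through $\coev_X$, a braiding, and $\ev_X$, realize $rs$ as $\Tr_X$ of an endomorphism of $X$ built from the chosen retract/splitting, and in (ii) deduce invertibility of that endomorphism from locality of $\End(X)$ and $\ker(\Tr_X)=R(X)$ to exhibit $X^*$ as a direct summand of the indecomposable $Y$. The only cosmetic difference is that you transport $r,s$ across the adjunction isomorphisms to $\bar r\colon Y\to X^*$, $\bar s\colon X^*\to Y$ and work with $\bar r\bar s\in\End(X^*)$, whereas the paper writes the corresponding $f\in\End(X)$ directly as a string diagram (eq.\ \eqref{eq:TraceOne}) and then dualizes to $f^*=g\circ h$; these are the same morphisms up to bookkeeping.
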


\begin{proof} (i) 
If such an $f$ exists, 
we can take $Y=X^*$, 
$r:= \ev_{X^{*}} (u_X f \otimes \id_{X^*})$
and
$s:= \Tr_X(f)^{-1} \coev_X$
so that $rs=1$, and thus $X$ is non-negligible.
Conversely, if $r: X\otimes Y\to \mathbbm{1}$ is a retraction with splitting $s: \mathbbm{1}\to X\otimes Y$,
then set
\begin{equation}
\label{eq:TraceOne}
f:=
\tikzmath{
\draw[mid>] (.3,.3) -- (.3,-.3);
\draw[mid<] (-.3,.3) to[out=-90,in=90] (-1,-.7) -- (-1,-1.2);
\draw[knot, mid>] (-.3,-.3) to[out=90,in=-90] (-1,.7) -- (-1,1.2);
\roundNbox{}{(0,.6)}{.3}{.2}{.2}{$r$}
\roundNbox{}{(0,-.6)}{.3}{.2}{.2}{$s$}
}
\qquad
\Longrightarrow
\qquad
\Tr_X(f)
\underset{\eqref{eq:Trace}}{=}
\tikzmath{
\draw[mid>] (.3,.3) -- (.3,-.3);
\draw[decoration={markings, mark=at position 0.45 with {\arrow{<}}}, postaction={decorate}] (-.3,.3) to[out=-90,in=90] (-1.6,-.9);
\draw[mid>] (-1,.9) arc(0:180:.3cm);
\draw[mid>, knot] (-1.6,.9) to[out=-90,in=90] (-1,-.9);
\draw[mid>] (-1.6,-.9) arc(-180:0:.3cm);
\draw[knot, mid>] (-.3,-.3) to[out=90,in=-90] (-1,.7) -- (-1,.9);
\roundNbox{}{(0,.6)}{.3}{.2}{.2}{$r$}
\roundNbox{}{(0,-.6)}{.3}{.2}{.2}{$s$}
}
=
\tikzmath{
\draw[mid>] (.3,.3) -- (.3,-.3);
\draw[mid<] (-.3,.3) -- (-.3,-.3);
\roundNbox{}{(0,.6)}{.3}{.2}{.2}{$r$}
\roundNbox{}{(0,-.6)}{.3}{.2}{.2}{$s$}
}
=1.
\end{equation}
(ii) 
If $r: X\otimes Y\to \mathbbm{1}$ is a retraction with splitting $s: \mathbbm{1}\to X\otimes Y$, then 
the $f$ from \eqref{eq:TraceOne} above is invertible as $\Tr_X(f)=1$, $\ker(\Tr_X)=R(X)$, and $X$ is indecomposable.
Dualizing, we see that $f^*: X^*\to X^*$ is an isomorphism.
But $f^*$ can be written as $g\circ h$, where 
\[
h:=
\tikzmath{
\draw[mid>] (.3,1) --node[right]{$\scriptstyle Y$} (.3,-.2);
\draw (-.9,-1) --node[left]{$\scriptstyle X^*$} (-.9,.2);
\draw[mid>] (-.3,-.2) --node[right]{$\scriptstyle X$} (-.3,.2);
\roundNbox{}{(-.6,.5)}{.3}{.2}{.2}{$\ev_X$}
\roundNbox{}{(0,-.5)}{.3}{.2}{.2}{$s$}
}
\qquad\qquad
\text{and}
\qquad\qquad
g:=
\tikzmath{
\draw (.3,-.3) to[out=90,in=-90] (1,.7) --node[right]{$\scriptstyle X^*$} (1,1.2);
\draw[mid<] (-.3,.3) --node[left]{$\scriptstyle X$} (-.3,-.3);
\draw[knot, mid>] (.3,.3) to[out=-90,in=90] (1,-.7) --node[right]{$\scriptstyle Y$} (1,-1.2);
\roundNbox{}{(0,.6)}{.3}{.3}{.3}{$r$}
\roundNbox{}{(0,-.6)}{.3}{.3}{.3}{$\coev_X$}
}\,.
\]
This implies that $g$ is a retraction with 
splitting $h\circ (f^*)^{-1}$, so $h$ identifies $X^*$ with a direct summand of $Y$. Since $Y$ is indecomposable, it follows that $h$ is an isomorphism. 
\end{proof} 

\subsection{Proof of Corollary \ref{cor:Rigidnon-negligibleTempered}}\label{c5}
When $X$ is rigid and non-negligible, then $\Tr_X: \End(X)\to \Bbbk$ is non-zero by Lemma \ref{lem:TraceNonzero}.
Since $\dim\End(X)<\infty$ and $R(X)$ is a maximal ideal as $X$ is indecomposable,
it suffices to prove that when $X$ is rigid, non-negligible, and $\dim\End(X^{\otimes n})<n!$ for some $n$, then $R(X)\subseteq \ker(\Tr_X)$.
To do so, we show that if $X$ is rigid and non-negligible and there is a nilpotent $f\in R(X)$ with $\Tr_X(f)\neq 0$, then $\dim\End(X^{\otimes n})\ge n!$ for all $n$.

First, we mimic the proof of Lemma \ref{lem:GoodRetract}.
Set $Y=X^*$, $r:= \ev_{X^{*}}(u_X\otimes \id_{X^*})$ and $N:=R(X)\otimes \id_{X^*}$.
Let $k\in\bbN$ be minimal such that $rN^ka= 0$
for all $a: \mathbbm{1}\to X\otimes X^*$.
Since $\Tr_X(f)\neq 0$, taking $a=\coev_X$ shows that $k\geq 2$.
Pick $g\in R(X)^{k-1}$ and $s_0:\mathbbm{1}\to X\otimes X^*$ so that $r(g\otimes \id_{X^*})s_0=1$.
Setting $r_0:= r(g\otimes \id_{X^*})$, we have  $r_0s_0=1$ 
and $r_0Ns_0=0$.
Observe that 
\[
\tikzmath{
\draw[mid>] (.3,.3) -- (.3,-.3);
\draw[mid<] (-.3,.3) to[out=-90,in=90] (-1,-.7) -- (-1,-1.2);
\draw[knot, mid>] (-.3,-.3) to[out=90,in=-90] (-1,.7) -- (-1,1.2);
\roundNbox{}{(0,.6)}{.3}{.2}{.2}{$r_0$}
\roundNbox{}{(0,-.6)}{.3}{.2}{.2}{$s_0$}
}
\qquad\qquad
\text{and}
\qquad\qquad
\tikzmath{
\draw[mid>] (.3,.3) -- (.3,-.3);
\draw[mid>] (-.3,-.3) to[out=90,in=-90] (-1,.7) -- (-1,1.2);
\draw[knot, mid<] (-.3,.3) to[out=-90,in=90] (-1,-.7) -- (-1,-1.2);
\roundNbox{}{(0,.6)}{.3}{.2}{.2}{$r_0$}
\roundNbox{}{(0,-.6)}{.3}{.2}{.2}{$s_0$}
}
\]
both lie in $R(X)$ as they contain $g\in R(X)^{k-1}\subset R(X)$.
Now by mimicking the proof of Theorem \ref{thm:FactorialGrowth},
we can use $r_0,s_0$ in place of the cap/cup to see that $\dim \End(X^{\otimes n})\ge n!$ for all $n$.
\qed

\subsection{Proof of Lemma \ref{stlem}}\label{stlempf} 

Suppose 
$$
Z=\bigoplus_{i=1}^r V_i\otimes X_i,
$$ 
where $X_i$ are pairwise non-isomorphic indecomposable objects of $\cC$ and $V_i$ are finite dimensional vector spaces. Then the nilradical $R(Z)$ of $\End(Z)$ has the form 
$$
R(Z)=\bigoplus_{i=1}^r \End(V_i)\otimes R(X_i) \oplus \bigoplus_{i\ne j}\Hom(V_i,V_j)\otimes \Hom(X_i,X_j).
$$
So $\Tr_Z$ vanishes on $R(Z)$, and on 
the quotient 
$$
\End(Z)/R(Z)\cong \bigoplus_{i=1}^r \End(V_i)
$$
induces the linear functional $T(f_1,\cdots,f_r):=\sum_{i=1}^r c_i\Tr_{\End(V_i)}(f_i)$, 
where $c_i:=\Tr_{X_i}({\rm id}_{X_i})$. 
If $f: Z\to Z$ is a nilpotent endomorphism, then its image $(f_1,\dots, f_r)\in \End(Z)/R(Z)$ satisfies that $f_i\in\End(V_i)$ is nilpotent for each $i$, and thus $\Tr_{\End(V_i)}(f_i)=0$ for each $i$.
We conclude that $\Tr_Z(f)=T(f_1,\cdots,f_r)=0$.
\qed

\section{Remarks}

\subsection{The linearized category of crystals} \label{sec:cry}

The assumption that $\cC$ is braided in Theorem 
\ref{thm:FactorialGrowth} cannot be removed. 
A counterexample is the $\Bbb C$-linearization ${\rm Crys}(G)$ of the {\it category of crystals} for the quantum group $G_q$ attached to a simply connected simple complex group $G$ \cite{MR2219257}, which is a semisimple category of moderate growth but is not braided, nor rigid (only weakly rigid). 
This category is a limit when $q\to 0$ of the braided, rigid categories $\Rep(G_q)$ with the same Grothendieck ring, but the braiding and rigidity are destroyed in the limit; in fact, the only rigid objects in ${\rm Crys}(G)$ are multiples of $\mathbbm{1}$. Indeed, in $\Rep(G_q)$, if $V_\lambda$ is the irreducible representation with highest weight $\lambda$ and $f: V_\lambda\to V_\lambda^{**}$ is an isomorphism, then $\Tr(f)\Tr((f^*)^{-1})=(\dim_q V_\lambda)^2$, which goes to infinity as $q\to 0$ for any $\lambda\ne 0$, as do those eigenvalues of the squared braiding on $V_\lambda\otimes V_\mu$ which are negative powers of $q$. 
The category ${\rm Crys}(G)$ is, however, a {\it coboundary category} \cite{MR2219257}: it carries a symmetric \emph{commutor} (a natural isomorphism $\sigma_{X,Y}:X\otimes Y\to Y\otimes X$) satisfying the Reidemeister move R2, but not R3. 
This shows that the move R3 plays a crucial role in the proof of Theorem \ref{thm:FactorialGrowth}.

To make things concrete, let us restrict to the case $G=SL(2)$. 
In this case, ${\rm Crys}(G)$ can be realized 
as the \emph{asymptotic Temperley-Lieb-Jones category} $TLJ(\infty)$.
Namely, recall that
the usual Temperley-Lieb-Jones category $TLJ(\delta)$ is defined 
so that the circle evaluates to $\delta$, while the zig-zag evaluates to 
$1$ (namely, $TLJ(\delta)=\Rep(SL(2)_q)$ where $\delta=-q-q^{-1}$). 
By renormalizing the diagrams, we may arrange that the circle evaluates to $1$, while the zig-zag evaluates to $\delta^{-1}$. 
This allows us to specialize $TLJ(\delta)$ at $\delta=\infty$ (i.e., $\delta^{-1}=0$), which yields a semisimple category $TLJ(\infty)$ with Grothendieck ring the representation ring of $SL(2)$ where the circle evaluates to $1$ 
while the zig-zag evaluates to $0$ (see \cite[\S3]{1308.2402}), which is not braided, nor rigid (although is weakly rigid). Namely, $\Hom([n],[m])$ 
in $TLJ(\infty)$ has the usual basis of Temperley-Lieb-Jones diagrams with $n$ inputs and $m$ outputs and composition given by concatenation of diagrams and removing the circles, but when the concatenation contains a zig-zag, instead of straightening it, the composition is declared to be zero. 

The endomorphism algebra of the object $[n]$ in $TLJ(\infty)$ is thus the asymptotic Temperley-Lieb-Jones algebra $TLJ_n(\infty)$ spanned by diagrams with $n$ inputs and $n$ outputs (no longer generated by the usual generators $e_i$ if $n\ge 3$, however). 
We obtain a quick proof that $TLJ_n(\infty)$ and hence the category $TLJ(\infty)$ are semisimple by 
defining a filtration\footnote{Here, by a \emph{filtration} on an algebra $A$, we mean a descending sequence $A\supset I_1\supset I_2\supset \cdots$ of two-sided ideals.
When $A=TLJ_n(\infty)$, $I_k$ is spanned by the diagrams with at least $k$ cups, and $I_k/I_{k+1}$ is a semisimple unital algebra whose unit $f_k$ is central in $A/I_{k+1}$. 
Thus $A/I_{k+1}= (1-f_k)A/I_{k+1}\oplus f_kA/I_{k+1}=A/I_k\oplus I_k/I_{k+1}$,
and so $A=\oplus_k I_k/I_{k+1}$ is semisimple.
} 
on $TLJ_n(\infty)$ in which the degree of a diagram $u$ is its number of cups.

We can even identify $TLJ_n(\infty)$ as a multi-matrix algebra by observing that the Jones-Wenzl idempotents exist and are non-zero for all $n\in\bbN$.
Indeed, the usual Jones-Wenzl recurrence relation \cite{MR873400} simplifies greatly as zig-zags are zero:
\[
JW_{n+1} = JW_n\otimes 1 - (JW_n\otimes 1) e_n (JW_n\otimes 1).
\]
One can also check via this recurrence relation that $JW_n$ is the linear combination of all projections $p$ with no nested cups/caps where the coefficient of $p$ is $(-1)^{\#\text{ caps in }p}$.
Matrix units for the summands of $TLJ_n(\infty)$ are then obtained by cabling the `waists' of basis elements by the appropriate Jones-Wenzl idempotents.
It then follows that $TLJ_n(\infty)$ is a multi-matrix algebra, as diagrams whose through strings are cabled by different Jones-Wenzl idempotents are orthogonal.\footnote{The article \cite{MR4958085} treats these categories in detail, including the semisimplicity, the formula for $JW_n$ and matrix block decomposition.
}

All these counterexamples have infinitely many simple objects, however. 
This gives rise to the following question, which is open even for weakly rigid categorifications of based rings (with duality).

\begin{question}\label{q1}
Can Corollary \ref{cor:rCat} be generalized to {\bf finite} semisimple weakly rigid categories (not necessarily braided)?\end{question}

\begin{rem}
The answer to Question \ref{q1} is ``yes" when $\cC$ is a semisimple (not necessarily braided) full monoidal subcategory of endomorphisms of an infinite von Neumann factor which is weakly rigid when $*:\cC\to \cC^{\rm mop}$ is taking conjugates (so that $*\circ *\cong \id_\cC$).
Indeed, rigidity follows by \cite[Thm.~4.1]{MR1059320}; see also \cite[Prop.~7.17]{MR3342166} for bimodules over a von Neumann factor.
\end{rem} 

\subsection{Rigidity for small growth dimension in non-braided categories} 
For a positive element $Z$ of a based ring $A$ as in \cite[Def.~3.1.3]{MR3242743}, define the {\it growth dimension} $\gd(Z)$ to be
$$
\gd(Z):=\lim_{n\to \infty}{\rm length}(Z^n)^{\frac{1}{n}}
$$
(see \cite[Def.~4.1]{MR4564264}), where the length of a positive element is the sum of its coefficients.\footnote{This limit exists since the sequence $d_n(Z):={\rm length}(Z^n)$ is super-multiplicative, i.e., $d_{n+m}(Z)\ge d_n(Z)d_m(Z)$, see \cite[\S4]{MR4564264}.} For example, 
for $A$ the character ring of a complex reductive group $G$, 
$\gd(Z)$ is the usual dimension of the representation $Z$, and if $A$ is finite, then $\gd(Z)$ coincides with the Frobenius-Perron dimension $\FPdim(Z)$. 

\begin{prop} \label{lessthan4}
Let $X,Y$ be simple objects of a split semisimple weakly rigid category $\cC$ over $\Bbbk$ such that 
$\mathbbm{1}$ is a direct summand in $Y\otimes X$ (and thus in $X\otimes Y$). If $\gd(Y\otimes X)<4$ then $X$ is rigid and $Y\cong X^*$.
\end{prop}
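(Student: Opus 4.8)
The plan is to imitate the proof of Theorem~\ref{thm:FactorialGrowth}, but to squeeze out the rigidity conclusion from the hypothesis $\gd(Y\otimes X)<4$ by counting, rather than from a bound on $\dim\End(X^{\otimes n})$. Since $\cC$ is split semisimple, $R(X)=0$, so by Lemma~\ref{lem:zig-zag} it suffices to show that the zig-zag endomorphism $z$ (or one of the variants $\gamma_\pm$ built using the braiding) is \emph{nonzero}; because $X$ is simple, $\End(X)=\Bbbk$, so any nonzero such endomorphism is automatically invertible, and then $X$ is rigid with ${}^*X\cong Y\cong X^*$. The whole problem thus reduces to: if all of $z,\gamma_+,\gamma_-$ were zero, derive a contradiction with $\gd(Y\otimes X)<4$.

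First I would set $A:=Y\otimes X\cong X\otimes Y$. Since $\mathbbm 1$ is a summand of $A$, and $A$ is self-dual-ish in the weakly rigid sense, the key numerical fact is that $\ell(A^{\otimes n})\ge \dim\End(A^{\otimes n})^{?}$— more precisely, I want to bound $\ell(A^{\otimes n})$ from below using the braid group. Run the argument of Theorem~\ref{thm:FactorialGrowth} with $X$ replaced by the object $A$: the morphisms $f_t\in\End(A^{\otimes n})$ for $t\in S_n$ are pairwise ``distinguishable'' by the bilinear form $\Phi$ built from the retract $A^{\otimes n}\to\mathbbm 1$ — \emph{unless} at some point a self-crossing $\gamma_\pm$ appears, which by hypothesis (all of $z,\gamma_\pm$ zero) kills that pairing. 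So the obstruction to $\{f_t\}_{t\in S_n}$ being independent is exactly these self-crossings. The point is then to show that even with the self-crossings killed, enough of the $f_t$ survive to force $\ell(A^{\otimes n})\ge 4^{n(1-o(1))}$ or at least $\ell(A^{\otimes n})\ge c\cdot 4^{n}$ for some $c>0$, contradicting $\gd(A)<4$, which gives $\ell(A^{\otimes n})\le C\cdot(4-\varepsilon)^n$.

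The cleanest route, and the one I'd try first, is small-$n$: take $n=2$. Then $\gd(A)<4$ forces $\ell(A^{\otimes 2})=\ell(A)^{2}$ to be too small once $\ell(A)\ge 2$, i.e. it constrains $\ell(A)$ and the structure of $A\otimes A$ sharply. Write $A=\mathbbm 1\oplus B$ where $B$ does not contain $\mathbbm 1$ (this uses $\mathbbm 1$ simple and $\End(\mathbbm 1)=\Bbbk$). Then $\gd(A)=\ell(A)=1+\ell(B)$, so $\ell(B)\le 2$. If $\gamma_\pm$ and $z$ all vanish, I'd argue that the braiding $c_{X,Y}c_{Y,X}$ acts on the $\mathbbm 1$-summand of $A=X\otimes Y$ in a way forcing an \emph{extra} copy of $\mathbbm 1$, or forcing $B$ to contain a summand whose further tensor powers blow up the length past $(4-\varepsilon)^n$; concretely, vanishing of $z$ means the ``straightening'' map $X\to X$ through $Y$ is $0$, which combined with $\mathbbm 1\hookrightarrow X\otimes Y$ being a genuine summand is already close to contradictory — this is where I expect the real work.

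The main obstacle is precisely that last step: turning ``$z=\gamma_+=\gamma_-=0$'' plus ``$\gd(Y\otimes X)<4$'' into a contradiction. The vanishing of $z$ alone is not contradictory in a merely weakly rigid category (witness ${\rm Crys}(G)$ and $TLJ(\infty)$ discussed in \S\ref{sec:cry}), so the braiding must be used essentially — and the role of the braiding here is to supply the self-crossings $\gamma_\pm$ and, via naturality, to relate $\Phi(f_s^{-1},f_t)$ for $s\ne t$ to these self-crossings, exactly as in the proof of Theorem~\ref{thm:FactorialGrowth}. I expect the bound $\gd<4$ enters because $S_n$ has $n!$ elements but, after throwing away all braid words that must contain a forbidden self-crossing, what remains is counted by some Catalan-type / subfactorial quantity whose exponential growth rate is exactly $4$ (the norm-squared of the relevant ``principal graph'', i.e. $\|A\|^2$ where $A$ acts by tensoring); so strict inequality $\gd(A)<4$ is incompatible with the surviving $f_t$'s being independent unless that surviving set is in fact \emph{all} of $S_n$, which forces $\gamma_\pm\ne 0$, hence $z\ne 0$ after adjustment, hence rigidity by Lemma~\ref{lem:zig-zag}.
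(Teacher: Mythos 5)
Your proposal relies essentially on a braiding — you invoke $\gamma_\pm$, Lemma~\ref{lem:zig-zag}, and the double braiding $c_{X,Y}c_{Y,X}$ — but Proposition~\ref{lessthan4} is stated (and placed in the paper under the heading ``Rigidity for small growth dimension in \emph{non-braided} categories'') for a plain weakly rigid semisimple category, with no braiding assumed. Since none of $\gamma_\pm$, Lemma~\ref{lem:zig-zag}, or the double braiding exist in that generality, the route you sketch cannot get off the ground. The paper's actual proof is braiding-free: it exploits that $\mathbbm{1}$ sits inside \emph{both} $Y\otimes X$ and $X\otimes Y$, giving caps and cups in both orders, hence four zig-zag scalars $z_1,\dots,z_4$; the relations $z_1=z_2=:z$ and $z_3=z_4=:z_*$ follow from idempotency, not from any commutor. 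One then pairs $\Hom((Y\otimes X)^{\otimes n},\mathbbm 1)$ with $\Hom(\mathbbm 1,(Y\otimes X)^{\otimes n})$ via composition, indexes both sides by \emph{non-crossing matchings} $u$ of $2n$ points (so the combinatorics is Temperley--Lieb/Catalan rather than $S_n$/braid), and shows the pairing matrix is $z^{a(u,v)}z_*^{b(u,v)}$ with $a-b=D(u)-D(v)$ and $a+b>0$ off the diagonal. If $z=0$ or $z_*=0$ the matrix is triangular with $1$'s on the diagonal in any $D$-compatible order, so the $g_u$ are independent, giving $\dim\Hom(\mathbbm 1,(Y\otimes X)^{\otimes n})\ge C_n$ and hence $\gd(Y\otimes X)\ge 4$. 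Taking the contrapositive proves the proposition.

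There are also two smaller but real problems with your sketch even setting the braiding issue aside. First, in the $n=2$ detour you write $\gd(A)=\ell(A)$; by super-multiplicativity one only has $\gd(A)\ge\ell(A)$, and the proposition genuinely needs the asymptotic growth rate, not the length of $A$ itself (the whole point is that $\ell(A)$ can equal $2$ — as for the crystal of the $SL_2$ standard representation — while $\gd(A)=4$). Second, the Catalan number ultimately comes from counting \emph{non-crossing matchings}, not from thinning out $S_n$ by deleting braid words that produce a self-crossing; your heuristic that ``what remains is counted by some Catalan-type quantity whose growth rate is exactly $4$'' points in the right numerical direction, but you would need the explicit triangularity of the pairing matrix with respect to the $D$-grading to turn that intuition into an independence statement. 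The braid-group framework of Theorem~\ref{thm:FactorialGrowth} does not obviously produce that triangular structure, while the non-crossing-matching framework does.
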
 

\begin{proof} Fix a retraction (cap) $r_+: Y\otimes X\to \mathbbm{1}$
and splitting (cup) $s_+: \mathbbm{1}\to Y\otimes X$ such that $r_+s_+=1$
and similarly $r_-: X\otimes Y\to \mathbbm{1}$
and $s_-: \mathbbm{1}\to X\otimes Y$ such that $r_-s_-=1$.
We therefore have the zig-zags 
$$
z_1=(r_+\otimes 1)(1\otimes s_-),\ 
z_2=(1\otimes r_+)(s_-\otimes 1),\
z_3=(r_-\otimes 1)(1\otimes s_+),\
z_4=(1\otimes r_-)(s_+\otimes 1)\in \Bbbk.
$$
Moreover, $z_1^2=z_1z_2=z_2^2$, so $z_1=z_2=:z$, and similarly 
$z_3=z_4=:z_*$.\footnote{Note that we can rescale $z$ by $\lambda\in \Bbbk^\times$ by rescaling $r_\pm,s_\pm$, but then $z_*$ will multiply by $\lambda^{-1}$, so the product $zz_*$ is defined canonically. If $X$ is rigid and 
$Y=X^*$ then it is easy to show that $zz_*=1/|X|^2$, where $|X|^2$ is the M\"uger squared norm of $X$ (\cite{MR3242743}, Def.~7.21.2).}

For a non-crossing matching $u$ on the word $(YX)^n$, 
let $D(u)$ be the number of $YX$-matchings in $u$ (so the
number of $XY$-matchings is $n-D(u)$). 
Consider the pairing 
$$
\Phi: \Hom((Y\otimes X)^{\otimes n},\mathbbm{1})\times \Hom(\mathbbm{1},(Y\otimes X)^{\otimes n})\to \Bbbk
$$
given by $\Phi(f,g)=f\circ g$. For every non-crossing matching 
$u$ of $[1,2n]$ we have elements $f_u\in \Hom((Y\otimes X)^{\otimes n},\mathbbm{1})$ defined using $r_+,r_-$ and $g_u\in \Hom(\mathbbm{1},(Y\otimes X)^{\otimes n})$ defined using $s_+,s_-$. It is easy to see that for any two non-crossing matchings $u,v$, we have 
$$
\Phi(f_u,g_v)=z^{a(u,v)}z_*^{b(u,v)},
$$
where $a(u,v)-b(u,v)=D(u)-D(v)$, $a(u,v)+b(u,v)>0$ if $u\ne v$, and $\Phi(f_u,g_u)=1$. 
Thus if $z=0$ or $z_*=0$, then the matrix $[\Phi(f_u,g_v)]$ is upper or lower triangular in any ordering compatible with the $D$-grading, 
with $1$ on the diagonal, hence nondegenerate. 
It then follows that $\lbrace g_u\rbrace$ are linearly independent. Since the number 
of non-crossing matchings is $C_n$ (the $n$-th Catalan number), 
this implies that 
$$
\dim \Hom(\mathbbm{1}, (Y\otimes X)^{\otimes n})\ge C_n.
$$
Since $\lim_{n\to \infty}C_n^{\frac{1}{n}}=4$, it follows that 
$\gd(Y\otimes X)\ge 4$. Thus if $\gd(Y\otimes X)<4$ then 
$z,z_*\ne 0$, implying that $X$ is rigid and $Y\cong X^*$. 
\end{proof} 

\begin{rem} Note that the bound in Proposition \ref{lessthan4} is sharp: for the crystal $X\in {\rm Crys}(SL_2)$ of the $2$-dimensional representation, $Y=X$ and $\gd(X)=2$, so $\gd(Y\otimes X)=4$, but $X$ is not rigid.

Moreover, if ${\rm gd}(Y\otimes X)=4$, it is possible that exactly one of $z,z_*$ is zero. 
Namely, we may consider the universal pivotal category defined in \cite[\S3.2]{MR4775671} which in our notation 
would be natural to denote $TLJ(\delta_1,\delta_2)$. 
Its morphisms are oriented Temperley-Lieb-Jones diagrams, with counterclockwise circles evaluated to $\delta_1$ and 
clockwise circles to $\delta_2$. We can renormalize the diagrams 
so that both circles evaluate to $1$ while $z=\delta_1^{-1}$, 
$z_*=\delta_2^{-1}$. 
Now we can take $\delta_1$ or $\delta_2$ to $\infty$ to get categories $TLJ(\infty,\delta_2)$, $TLJ(\delta_1,\infty)$ where $z=0,z_*\ne 0$, respectively $z\ne 0,z_*= 0$.

Note that the category $TLJ(\delta,\infty)$ (and likewise $TLJ(\infty,\delta)$) is semisimple for $\delta\ne 0$. 
Indeed, the endomorphism algebra of a word $W$ in $X,Y$ in $TLJ(\delta,\infty)$ is the {\it asymptotic lopsided Temperley-Lieb-Jones algebra} $TLJ_W(\delta,\infty)$ in which both circles evaluate to $1$, $z=\delta^{-1}$ and $z_*=0$. 
Define a filtration on this algebra in which the degree of a diagram $u$ is the number of $s_+:\mathbbm{1}\to Y\otimes X$ minus the number of $r_+:Y\otimes X\to \mathbbm{1}$ in $u$. 
It is easy to check that this indeed defines a filtration, and 
the associated graded algebra
${\rm gr}(TLJ_W(\delta,\infty))\cong TLJ_W(\infty,\infty)$, where $z=0=z_*$.

Thus, it suffices to show that $TLJ_W(\infty,\infty)$ is semisimple;
we argue similarly to \S\ref{sec:cry}.
Define the degree of a diagram $u$ in $TLJ_W(\infty,\infty)$ as its number of cups.
Then $I_k/I_{k+1}$ is a direct sum of matrix algebras indexed over sub-words of $W$ which can be obtained by successively removing instances of $XY$ or $YX$ from $W$.
(These subwords are the words which are retractions of $W$ via $r_\pm$.)
\end{rem} 

\subsection{Categories of non-moderate growth}

The assumption that $X$ has moderate growth in Theorem 
\ref{thm:FactorialGrowth} cannot be removed, even for symmetric categories. 
A counterexample is the symmetric oriented Brauer category $OB(\infty)$, 
which is the limit as $t\to \infty$ of the oriented Brauer categories 
$OB(t)$, also known as the Deligne categories $\Rep(GL_t)$, $t\in \Bbb C$ \cite[\S9.12]{MR3242743}. 
Namely, recall that $OB(t)$ has objects $[n,m]$ for $n,m\in \bbN$, and $\Hom([n_1,m_1],[n_2,m_2])$ 
is spanned by appropriate walled Brauer diagrams, with composition given by concatenation of diagrams, so that the circle evaluates to $t$. 
Similar to 
the construction of $TLJ(\infty)$ in \S\ref{sec:cry}, we may 
renormalize these diagrams by suitable powers of $t$ so that 
the circle evaluates to $1$, but the zig-zags are $t^{-1}$. 
This allows us to 
evaluate at $t=\infty$ (i.e., $t^{-1}=0$) and, upon Cauchy completion, obtain a symmetric category $OB(\infty)$. 
In this category, diagrams are composed by concatenation and removing circles, but if the concatenated diagram contains a zig-zag, the composition is declared to be zero. 

For example, $\End([n,m])$ is the asymptotic walled Brauer algebra $W_{n,m}(\infty)$ obtained by the above limiting procedure from the usual walled Brauer algebra $W_{n,m}(t)$. This algebra has the usual basis of walled Brauer diagrams with $n$ inputs and $n$ outputs to the left of the wall and $m$ inputs and $m$ outputs to the right of the wall. As in \S\ref{sec:cry}, we can define a filtration on $W_{n,m}(\infty)$ in which the degree of a diagram $u$ is its number of cups, and the associated graded algebra $\gr(W_{n,m}(\infty))=\bigoplus_k I_k/I_{k+1}$ is manifestly semisimple with $k$-th summand
\[
I_k/I_{k+1} \cong \bbC[S_{n-k}]\otimes \bbC[S_{m-k}]\otimes \operatorname{Mat}\left(\frac{n!m!}{(n-k)!(m-k)!k!}\right).
\]
This implies that $W_{n,m}(\infty)$ and thus the category $OB(\infty)$ are semisimple, with simples $X_{\lambda,\mu}$ labeled by pairs of partitions, and the same fusion rules as in $OB(t)$. 
However, the only rigid objects in this category are multiples of $\mathbbm{1}$: the dimension $\dim V_{\lambda,\mu}(t)$ in $OB(t)$ is a non-constant polynomial of $t$, hence goes to infinity as $t\to \infty$. 
Unsurprisingly, these are also the only objects in this category that have moderate growth. 

One can also perform the same limiting procedure to the unoriented Brauer category
$UB(t)=\Rep(O_t)$ (\cite[\S9.12]{MR3242743}), 
getting a semisimple category $UB(\infty)$ which is likewise a counterexample to Theorem \ref{thm:FactorialGrowth} without moderate growth.
 
\subsection{Categories with nilpotent endomorphisms of nonzero trace} 

The moderate growth assumption in Corollary \ref{cor:Rigidnon-negligibleTempered} cannot be removed, even for symmetric categories, as was first observed by Deligne \cite[\S5.8]{MR2348906}. 
Namely, there exist symmetric categories 
(of non-moderate growth) in which the trace of a nilpotent endomorphism of an object can be nonzero. A nice example is the oriented Brauer category $OB(t)$ over a field $\Bbbk$ of characteristic $2$, where $t\in \Bbbk$ is not equal $0$ or $1$. Then we can take $V:=[1,0]$, 
$X=V^{\otimes 2}=[2,0]$ and $z=1-s\in \End(X)$ where $s$ is the swap $V^{\otimes 2}\to V^{\otimes 2}$. Then $z^2=0$ but $\Tr(z)=\Tr(1)-\Tr(s)=t^2-t\ne 0$. Many other examples of categories containing nilpotent endomorphisms with nonzero trace appear in 
\cite{MR4450143}. Of course, such a symmetric category cannot have monoidal functors to abelian symmetric tensor categories, since in the latter 
the trace of any nilpotent endomorphism $z$ must be zero (as $z$ is strictly upper triangular in the filtration by kernels of its powers).

\section{Rigidity in braided $r$-categories} 
\label{sec:NonsemisimpleBraidedRCategories}

This section is inspired by the paper \cite{MR1239507}. 
Its goal is to apply our results 
to braided $r$-categories that are not necessarily semisimple. 
Such categories arise in logarithmic conformal field theory \cite{MR4883790}.
In particular, this allows us to significantly simplify a number of arguments in \cite{MR1239507}.  

\subsection{2-out-of-3 rigidity} 
Let $\cC$ be a braided monoidal category. Recall that if $X\in \cC$ is rigid then so is $X^*\cong {}^*X$. 

Assume in addition that $\cC$ is abelian and artinian with right exact tensor product, so that $\Tor(X,Y)$ is well-defined for every $X,Y\in\cC$, possibly as a pro-object.\footnote{For a finite artinian category (modules over a finite dimensional algebra $A$), a right exact functor is always tensoring with an $A$-bimodule over $A$. 
So for every object $X$, there is an $A$-module $M_X$, and $\Tor(X,Y)$ is the derived tensor product $M_X\otimes_A^L Y$. 
In the infinite case it is similar, but we must consider comodules over a coalgebra $C$ (i.e., topological module over $A=C^*$) and the functor is tensoring with this topological module. 
This will define the $\Tor$-object in the pro-completion of $C$-comod.}
An object $X\in \cC$ is called {\bf flat} if the functor $X\otimes -$ is exact. 

\begin{lem}[{c.f.~\cite[Cor.~1, p.~441]{MR1239507}}]\label{L0}
Every rigid object in $\cC$ is flat.
\end{lem} 
\begin{proof} 
Although this follows formally from the adjunctions $X^* \otimes - \dashv X \otimes - \dashv {}^*X\otimes -$, we provide a direct proof for convenience and simplicity.
We show that if a morphism $f: Y\to Z$ is a monomorphism, then so is the morphism $\id_X\otimes f: X\otimes Y\to X\otimes Z$, i.e., that for every object $T\in \cC$, the associated linear map $f_{T,X}:\Hom(T,X\otimes Y)\to \Hom(T,X\otimes Z)$ is injective.
Since $X$ is rigid, this map can be viewed as the pullback map 
$\widetilde f_{T,X}: \Hom(X^*\otimes T,Y)\to \Hom(X^*\otimes T,Z)$, which is obviously injective. 
\end{proof} 

\begin{lem}  \label{L1} 
Consider a short exact sequence in $\cC$ of the form
$
0\to Y\xrightarrow{i} Z\xrightarrow{p} X\to 0
$.
Then:
\begin{enumerate}[label=(\roman*)]
\item
If $X,Y$ are rigid then $Z$ is rigid. 
\item
If $Z,X$ are rigid then $Y$ is rigid. 
\item
If $Y,Z$ are rigid and the natural morphism $i^*:Z^*\to Y^*$ is an epimorphism, then $X$ is rigid.  
\end{enumerate}
\end{lem} 

\begin{proof} (i)
Since $X,Y$ are rigid, by Lemma \ref{L0} they are flat. Using the long exact sequence of Tor groups, 
we see that $Z$ is flat as well. Also $Z$ represents an element 
$$\alpha\in \Ext^1(X,Y)\cong \Ext^1(\mathbbm{1},Y\otimes X^*)\cong \Ext^1(Y^*,X^*).$$ 
Let $Z^\vee$ be the extension 
of $Y^*$ by $X^*$ defined by the element $-\alpha$, 
so it is also flat. 
Since $X,Y,Z,X^*,Y^*,Z^\vee$ are flat, we see that the object $Z\otimes Z^\vee$ has a 3-step filtration with successive quotients 
$Y\otimes X^*,X\otimes X^*\oplus Y\otimes Y^*,X\otimes Y^*$, meaning that we have subobjects 
$$
F_1=Y\otimes X^*
\subset 
F_2=
\ker(
Z\otimes Z^\vee \to X\otimes Y^*
)
\subset 
F_3=
Z\otimes Z^\vee
$$
with $F_2/F_1\cong X\otimes X^*\oplus Y\otimes Y^*$ and $F_3/F_2\cong X\otimes Y^*$.
By construction of $Z^\vee$,
the morphism
$\coev_X\oplus \coev_Y: \mathbbm{1}\to X\otimes X^*\oplus Y\otimes Y^*$ lifts to a morphism $\mathbbm{1}\to F_2$, hence to a morphism $\mathbbm{1}\to Z\otimes Z^\vee$, which we denote by $\coev_Z$. 
To see this, one looks at the exact sequence
$$
0\to F_1=Y\otimes X^*\to F_2 \to F_2/F_1 \cong X\otimes X^*\oplus Y\otimes Y^*\to 0
$$
and computes that the pullback in $\Ext^1(\mathbbm{1}, Y\otimes X^*)$ of its class in $\Ext^1(X\otimes X^*\oplus Y\otimes Y^*, Y\otimes X^*)$ is trivial since we used $-\alpha$ to define $Z^\vee$, whence our lift $\coev_Z: \mathbbm{1}\to Z\otimes Z^\vee$ exists.
We remark that writing 
$i^\vee: Z^\vee\to Y^*$
and
$p^\vee:X^*\to Z^\vee$,
the following diagram commutes.
$$
\begin{tikzcd}[column sep=4em]
Y\otimes Y^*
\arrow[d,"i\otimes \id_{Y^*}"]
&
\mathbbm{1}
\arrow[d,"\coev_Z"]
\arrow[l,"\coev_Y",swap]
\arrow[r,"\coev_X"]
&
X\otimes X^*
\arrow[d,"\id_X\otimes p^\vee"]
\\
Z\otimes Y^*
&
Z\otimes Z^\vee
\arrow[l,"\id_Z\otimes i^\vee",swap]
\arrow[r,"p\otimes \id_{Z^\vee}"]
&
X\otimes Z^\vee
\end{tikzcd}
$$
This can be seen by analyzing the other faces of the following larger diagram.
$$
\begin{tikzcd}
\mathbbm{1} 
\arrow[rr,"\coev_X"]
\arrow[dd,"\coev_Y"]
\arrow[dr,"\coev_Z"]
&& 
X\otimes X^* 
\arrow[dd,hook]
\arrow[dr,hook]
\\
& 
Z\otimes Z^\vee
\arrow[rr, crossing over]
&&
X\otimes Z^\vee
\\
Y\otimes Y^*
\arrow[rr,hook]
\arrow[dr,hook]
&&
X\otimes X^*\oplus Y\otimes Y^* = F_2/F_1
\arrow[dr,hook]
\arrow[dl]
\arrow[ur]
\\
&
Z\otimes Y^*
\arrow[from=uu, crossing over]
&&
Z\otimes Z^\vee / F_1 = F_3/F_1
\arrow[ll]
\arrow[uu]
\arrow[from=uull, crossing over, bend left=16]
\end{tikzcd}
$$

By a dual argument, we can analyze $Z^\vee\otimes Z$ by a 3-step filtration with successive quotients $X^*\otimes Y, X^*\otimes X\oplus Y^*\otimes Y,Y^*\otimes X$ given by
$$
G_1= X^*\otimes Y
\subset
G_2 = \ker(Z^\vee\otimes Z \to Y^*\otimes X)
\subset
G_3 = Z^\vee \otimes Z
$$
to construct an evaluation map $\ev_Z:Z^\vee\otimes Z\to \mathbbm{1}$.
One looks at the exact sequence
$$
0
\to
X^*\otimes X\oplus Y^*\otimes Y \cong G_2/G_1
\to
\operatorname{coker}(X^*\otimes Y\to Z^\vee\otimes Z)
\cong G_3/G_1
\to
Y^*\otimes X
\to 
0.
$$
By reasoning dually as before, the morphism $\ev_X\oplus \ev_Y: X^*\otimes X\oplus Y^*\otimes Y\to \mathbbm{1}$ extends to a morphism $G_3/G_1\to \mathbbm{1}$, hence to a morphism $Z^\vee\otimes Z\to \mathbbm{1}$, which we denote by $\ev_Z$.
By similar reasoning, $\ev_Z$ makes the following diagram commute.
$$
\begin{tikzcd}[column sep=4em]
Z^\vee\otimes Y
\arrow[r,"\id_{Z^\vee}\otimes i"]
\arrow[d,"i^\vee \otimes \id_Y"]
&
Z^\vee\otimes Z
\arrow[d,"\ev_Z"]
&
X^*\otimes Z
\arrow[d,"\id_{X^*}\otimes p"]
\arrow[l,"p^\vee\otimes \id_Z", swap]
\\
Y^*\otimes Y
\arrow[r,"\ev_Y"]
&
\mathbbm{1}
&
X^*\otimes X
\arrow[l,"\ev_X"]
\end{tikzcd}
$$
Now one verifies that both $\ev_Z,\coev_Z$ make the following diagram commute.
$$
\begin{tikzcd}
Y
\arrow[rrr,"\coev_Y\otimes \id_Y"]
\arrow[dd,"i"]
\arrow[drr,"\coev_Z\otimes \id_Y"]
&&&
Y\otimes Y^*\otimes Y
\arrow[rrr,"\id_Y\otimes \ev_Y"]
\arrow[dr,"i\otimes \id_{Y^*\otimes Y}"]
&&&
Y
\arrow[dd,"i"]
\\
&&
Z\otimes Z^\vee \otimes Y
\arrow[dr,"\id_{Z\otimes Z^\vee}\otimes i"]
\arrow[rr,"\id_Z\otimes i^\vee\otimes \id_Y"]
&&
Z\otimes Y^* \otimes Y
\arrow[drr,"\id_Z\otimes \ev_Y", near start]
\\
Z
\arrow[rrr,"\coev_Z\otimes \id_Z"]
\arrow[dd,"p"]
\arrow[rrd,"\coev_X\otimes \id_Z"]
&&&
Z\otimes Z^\vee \otimes Z
\arrow[rrr,"\id_Z\otimes \ev_Z"]
\arrow[dr,"p\otimes \id_{Z^\vee\otimes Z}"]
&&&
Z
\arrow[dd,"p"]
\\
&&
X\otimes X^*\otimes Z
\arrow[rr,"\id_X\otimes p^\vee\otimes \id_Z"]
\arrow[dr,"\id_{X\otimes X^*}\otimes p"]
&&
X\otimes Z^\vee\otimes Z
\arrow[rrd,"\id_X\otimes \ev_Z", near start]
\\
X 
\arrow[rrr,"\coev_X\otimes \id_X"]
&&&
X\otimes X^*\otimes X
\arrow[rrr,"\id_X\otimes \ev_X"]
&&&
X
\end{tikzcd}
$$
Looking at the outside rectangle above and the zig-zag $z:Z\to Z$ in the middle, 
since the zig-zags for $X,Y$ are both identities, 
our short exact sequence fits in the following commutative diagram
$$
\begin{tikzcd}
0 
\arrow[r]
&
Y
\arrow[r,"i"]
\arrow[d,equals]
&
Z
\arrow[r,"p"]
\arrow[d,"z"]
&
X
\arrow[r]
\arrow[d,equals]
&
0
\\
0 
\arrow[r]
&
Y
\arrow[r,"i"]
&
Z
\arrow[r,"p"]
&
X
\arrow[r]
&
0.
\end{tikzcd}
$$
Since $\cC$ is abelian, $z$ is an isomorphism by the Short Five Lemma. 
We may thus renormalize one of $\ev_Z,\coev_Z$ so that the zig-zag axiom holds on the nose.
One then uses a similar diagram to check that the zig-zag for $Z^\vee$, which is an idempotent,\footnote{This follows similarly to the proof of Lemma \ref{lem:zig-zag}. 
The zig-zag $z:Z\to Z$ is $\id_Z$, and $z$ is a sub-diagram of $(z')^2$ for the zig-zag $z':Z^\vee\to Z^\vee$.}
is also invertible, and is thus the identity.

(ii)
We have an epimorphism $p: Z\to X$ whose kernel is $Y$. 
We have 
$$
\Hom(Z,X)\cong \Hom(\mathbbm{1},X\otimes Z^*)\cong \Hom(X^*,Z^*),
$$ 
and we write $p^*:X^*\to Z^*$ for the corresponding morphism.\footnote{The morphism
$p^*$ is a monomorphism since for every $T\in \cC$ the map $\Hom(T,X^*)\to \Hom(T,Z^*)$ can be written as a map $\Hom(T\otimes Z,\mathbbm{1})\to \Hom(T\otimes X,\mathbbm{1})$ which is injective since 
$T\otimes Z\to T\otimes X$ is an epimorphism by right exactness of $\otimes$.} Denote by $Y^\vee$ the cokernel of $p^*$. 
We have a morphism $(1\otimes \pi)\circ \coev_Z: \mathbbm{1}\to Z\otimes Y^\vee$, where $\pi: Z^*\to Y^\vee$ is the projection. 

Since $X$ is rigid, it is flat by Lemma \ref{L0}, so ${\rm Tor}^1(X,Y^\vee)=0$. Hence we have a short exact sequence 
$$
0\to Y\otimes Y^\vee\to Z\otimes Y^\vee\to X\otimes Y^\vee\to 0
$$
(cf. \cite[Lem.~4.6]{MR1239507}). 
The morphism $(p\otimes \id_{Y^\vee})\circ (\id_Z\otimes \pi)\circ \coev_Z$ is the morphism $\mathbbm{1}\to X\otimes Y^\vee$ corresponding to the composition $X^*\to Z^*\to Y^\vee$, which is zero by definition of $Y^\vee$. 
Thus the morphism $(\id_Z\otimes \pi)\circ \coev_Z$ lands in $Y\otimes Y^\vee$, and we denote this morphism by $\coev_Y$. 

Similarly, consider the morphism $\ev_Z: Z^*\otimes Z\to \mathbbm{1}$. It pulls back to a morphism \linebreak $\ev_Z\circ (\id_{Z^*}\otimes i): 
Z^*\otimes Y\to \mathbbm{1}$.
Moreover, since the pullback of this morphism to $X^*\otimes Y$ is zero
(as it corresponds to the composition $Y\to Z\to X$) and since we have an exact sequence $X^*\otimes Y\to Z^*\otimes Y\to Y^\vee \otimes Y\to 0$, 
the morphism $\ev_Z\circ (\id_{Z^*}\otimes i)$ 
is the pullback of a unique morphism $Y^\vee\otimes Y\to \mathbbm{1}$, which we denote $\ev_Y$. 
Now diagram chasing shows that the morphisms $\ev_Y,\coev_Y$ equip $Y^\vee$ with the structure of the left dual $Y^*$ of $Y$, as desired. 

(iii)
We have a monomorphism $i: Y\to Z$ which by assumption gives rise to an epimorphism $i^*: Z^*\to Y^*$, and we may define $X^\vee$ to be the kernel of $i^*$, so that we have a short exact sequence 
$$0\to X^\vee\to Z^*\to Y^*\to 0.$$ 
Since $Y^*$ is flat, ${\rm Tor}^1(X,Y^*)=0$, so we obtain a short exact sequence 
$$
0\to X\otimes X^\vee\to X\otimes Z^*\to X\otimes Y^*\to 0.
$$
We have a morphism $(p\otimes \id_{Z^*})\circ \coev_Z: \mathbbm{1}\to X\otimes Z^*$ where $p: Z\to X$ is the projection. We have 
$(\id_X\otimes i^*)\circ (p\otimes \id_{Z^*})\circ \coev_Z=0$, 
so the morphism $(p\otimes \id_{Z^*})\circ \coev_Z$ lands in the kernel 
of the morphism $\id_X\otimes i^*: X\otimes Z^*\to X\otimes Y^*$. Thus $(p\otimes \id_{Z^*})\circ \coev_Z$ lands in $X\otimes X^\vee$, and we denote this morphism by $\coev_X$.  

Similarly, we have a morphism 
$\ev_Z\circ (\iota\otimes \id_Z): X^\vee\otimes Z\to \mathbbm{1}$, where $\iota: X^\vee\hookrightarrow Z^*$ is the inclusion, and its pullback to a morphism $X^\vee\otimes Y\to \mathbbm{1}$ corresponds to the morphism $X^\vee\to Y^*$, which is zero by definition of $X^\vee$. But we have an exact sequence
$$
X^\vee\otimes Y\to X^\vee \otimes Z\to X^\vee\otimes X\to 0,
$$
so $\ev_Z\circ (\iota\otimes \id_Z)$ is the pullback of the unique morphism 
$X^\vee\otimes X\to \mathbbm{1}$, which we denote by $\ev_X$. 
Now diagram chasing shows that the morphisms $\ev_X,\coev_X$ equip $X^\vee$ with the structure of the left dual $X^*$ of $X$, as desired. 
\end{proof} 

\begin{rem} 
\label{rem:CannotDropEpiAssumption}
In Lemma \ref{L1}(iii), the assumption that the morphism $i^*:Z^*\to Y^*$ is an epimorphism cannot be dropped. 
For example, let $\cC$ be the category of $\Bbbk[x]$-modules and $X=\Bbbk$, $Y=Z=\Bbbk[x]$. Then the morphism $i: Y\to Z$ is the multiplication map 
$x: \Bbbk[x]\to \Bbbk[x]$, so the dual morphism 
$i^*: Z^*\to Y^*$ is the same, hence not an epimorphism.
In this case $X$ is not rigid, even though $Y,Z$ are. 

See Remark \ref{rem:McRae} below for another example pointed out by Robert McRae in which $\cC$ is artinian.
\end{rem} 

\begin{rem}
A similar result to Lemma \ref{L1}(i) was proved in \cite{MR4315663} for ribbon r-categories, but stated in the language of VOA tensor categories.
However, their result holds in the more general setting of abelian r-categories; see \cite[Thm.~3.13]{2409.14618}.
The interested reader may also extend our Lemma \ref{L1} beyond the braided case 
taking more care about left versus right duals.
(We only used the braiding to check only one kind of duals.)
\end{rem}

\begin{defn} 
We say that $\cC$ is {\bf 2-out-of-3 rigid} if 
in every short exact sequence in $\cC$, if any two of its members are rigid 
then so is the third. 
\end{defn} 

The following corollary follows immediately from Lemma \ref{L1}. 

\begin{cor}\label{C0} Suppose that in $\cC$, 
for any monomorphism $Y\to Z$ between rigid objects, 
the dual morphism $Z^*\to Y^*$ is an epimorphism. 
Then $\cC$ is 2-out-of-3 rigid. 
\end{cor} 


\begin{cor}[{cf. \cite[Prop.~A.2, p.~444]{MR1239507}}]\label{C1}
Let $\cC$ be a braided artinian abelian r-category.\footnote{Note that unlike \cite{MR1239507}, we do not assume that $\cC$ is a Frobenius category.} 
Then $\cC$ is 2-out-of-3 rigid.
\end{cor} 

\begin{proof} It is well known that an abelian r-category admits internal homs \cite{MR3134025}, so its tensor product is right exact. 
Since the dualization functor 
$X\to X^\vee$ is an anti-equivalence and $X^\vee\cong X^*$ when $X$ is rigid, the assumption of Lemma \ref{L1}(iii) holds. 
Thus the result follows from Lemma \ref{L1}. 
\end{proof}

\subsection{Recursive filtrations} 

Let $\Bbbk$ be an algebraically closed field. 

\begin{defn}[{\cite[Def.~A.6]{MR1239507}}] 
A {\bf semi-rigid} monoidal category over $\Bbbk$ is an abelian r-category over $\Bbbk$ with objects of finite length and simple $\mathbbm{1}$. 
\end{defn} 

Note that in such a category,
$
\Hom(X,Y^\vee)\cong \Hom(X\otimes Y,\mathbbm{1}),
$ 
so 
$$\dim \Hom(X,Y)\le {\rm length}(X\otimes Y^\vee).$$ 
Hence $\Hom(X,Y)$ is finite dimensional, so $\cC$ is artinian and $\End(\mathbbm{1})=\Bbbk$.  

\begin{defn} Let $\cC$ be an artinian monoidal category over $\Bbbk$ with the set $\Irr(\cC)$ of isomorphism classes of simple (=irreducible) objects. A {\bf recursive filtration} on $\Irr(\cC)$ relative to a subset $S\subset \Irr(\cC)$ is an increasing exhaustive filtration of $\Irr(\cC)$ by subsets $\Irr_n(\cC)$, $n\in \Bbb Z_{\ge 0}$ such that for every $n\ge 0$ and $X\in \Irr_{n}(\cC)$, there exist $X_1,\cdots,X_m\in \cC$ with composition factors in $S$ for which the composition series of $X_1\otimes\cdots\otimes X_m$ consists of a single copy of $X$ and elements of $\Irr_{n-1}(\cC)$, where by convention, $\Irr_{-1}(\cC):=\emptyset$. 
Note that we may have $m=0$ giving the tensor product of the empty set of objects, which is $\mathbbm{1}$ by definition.
\end{defn} 

\begin{prop}
\label{prop:RecursiveFiltration} 
Suppose $\cC$ is 2-out-of-3 rigid and 
$\Irr(\cC)$ admits a recursive filtration relative to $S$. 
If $S$ 
consists of rigid objects then $\cC$ is rigid. 
\end{prop} 
\begin{proof} It suffices to show that all simple objects in $\cC$ are rigid. 
To this end, it suffices to show that for all $n$, all $X\in \Irr_n(\cC)$ 
are rigid. 
We induct on $n$. 
The base case is trivial.
To pass from $n$ to $n+1$, let $X\in \Irr_{n+1}(\cC)$ and let $X_1,\cdots,X_m\in \cC$ be objects with composition factors in $S$ such that the composition series of $X_1\otimes\cdots\otimes X_m$ consists of a single copy of $X$ and objects in $\Irr_n(\cC)$. 
So there is a 3-step filtration of $X_1\otimes\cdots\otimes X_m$ with terms $Z,X,Y$, where all composition factors of $Z$ and $Y$ are in $\Irr_n(\cC)$. 
Let $T$ be the kernel of the morphism $X_1\otimes\cdots\otimes X_m\to Y$, so we have a short exact sequence 
$$
0\to T\to X_1\otimes\cdots\otimes X_m\to Y\to 0.
$$
Since $X_i$ and (by the induction assumption) $Y,Z$ have rigid composition factors, they are rigid by the 2-out-of-3 property. Hence $X_1\otimes\cdots\otimes X_m$ is rigid. Thus $T$ is rigid by the 2-out-of-3 property. But we also have a short exact sequence 
$$
0\to Z\to T\to X\to 0.
$$ 
Since $Z,T$ are rigid, we conclude by the 2-out-of-3 property that $X$ is rigid, as needed. 
\end{proof} 

Combining Proposition \ref{prop:RecursiveFiltration} with our main result Theorem \ref{thm:FactorialGrowth}, we thus obtain:

\begin{thm}\label{maain} Let $\cC$ be a braided semi-rigid category of moderate growth such that $\Irr(\cC)$ has a recursive filtration 
with respect to a subset $S\subset \Irr(\cC)$ consisting of non-negligible objects. Then $\cC$ is rigid. 
\end{thm} 

\subsection{Application: rigidity of the Kazhdan-Lusztig category at negative rational levels} 
An example of application of Theorem \ref{maain} is a simplification of the proof of rigidity of the category $\mathcal O_\kappa$ in \cite{MR1239507}. Namely, let $\kappa\in \Bbb Q_{<0}$, $\mathfrak{g}$ be a simply laced simple finite dimensional complex Lie algebra,  and $\mathcal O_\kappa$ be the Kazhdan-Lusztig category of $\mathfrak{g}[[t]]$-integrable finitely generated 
representations of the affine Lie algebra $\widehat{\mathfrak{g}}$ at level $\kappa-h^\vee$, where $h^\vee$ is the dual Coxeter number of $\mathfrak{g}$. 
In \cite[Prop~31.2, p.~410]{MR1239507} it is proved that $\mathcal O_\kappa$ is a braided, semi-rigid category, with $\Irr(\cC)=P_+$, the set of dominant integral weights, and it is easy to see that it has moderate growth. 
Let $S:=\lbrace \omega_1,\cdots,\omega_r\rbrace$ 
be the fundamental weights for $\mathfrak{g}$,
let $\rho=\sum_i \omega_i$, and let $\Irr_n(\cC)$ be the set of weights $\lambda\in P_+$ with $(\lambda,\rho)\le n$.
Assume that $\kappa$ is such that the Weyl module $V_\lambda^\kappa$ over $\widehat{\mathfrak{g}}$ is irreducible, and its quantum dimension $d_\lambda(q):=\prod_{\alpha>0} \frac{[(\omega_i+\rho,\alpha)]_q}{[(\rho,\alpha_i)]_q}$ is nonzero at $q=e^{\pi i/\kappa}$ (this rules out an explicit finite set of numerators for $\kappa$, cf.~\cite[p.~413]{MR1239507}). 
Then, as shown in \cite{MR1239507} (see p.~413), $V_{\omega_i}^\kappa\otimes V_{\omega_{i^*}}^\kappa$ contains $\mathbbm{1}$ as a direct summand, so the $V_{\omega_i}^\kappa$ are non-negligible. 
If $\lambda=\sum_i n_i\omega_i$, $n=\sum_i n_i$, and $L_\lambda^\kappa\in \mathcal O_\kappa$ is the simple module with highest weight $\lambda$, then the composition series of 
$\bigotimes_i (V_{\omega_i}^\kappa)^{\otimes n_i}$ consists 
of $L_\lambda^\kappa$ and elements of $\Irr_{n-1}(\cC)$, 
so $\cC$ admits a recursive filtration relative to $S$. 
Thus applying Theorem \ref{maain}, we get that 
$\mathcal O_\kappa$ is rigid, which is \cite[Thm.~32.1, p.~419]{MR1239507}. 
This allows us to avoid quite a few technicalities in the proof of this theorem. 

\begin{rem}  
Conjecture A.1 in \cite[p.~477]{MR1239507} states that in a semi-rigid category, if $X\otimes Y$ is rigid then so are $X,Y$. We note that for braided semi-rigid categories (which is the setting needed in \cite{MR1239507})
this immediately follows from \cite[Cor.~3.5]{MR3134025}. 
\end{rem} 

\begin{rem}
\label{rem:McRae}
We thank Robert McRae for an email exchange providing the following additional example showing the assumption that $i^*$ is an epimorphism in Lemma \ref{L1}(iii) cannot be dropped.
His example comes from Grothendieck-Verdier categories which are not r-categories, and is artinian, whereas the example in Remark \ref{rem:CannotDropEpiAssumption} above is not.

The Weyl modules $V_n=V^{p/q}_{n\omega_1}$ for affine $\mathfrak{sl}_2$ obtained by parabolic induction from the $(n+1)$-dimensional simple $\mathfrak{sl}_2$-module, at level $k = -2 + p/q$ where $p,q$ are relatively prime positive integers, give an exact sequence
$$
0 \to V_{p-2} \to V_1\otimes V_{p-1}\to V_p \to 0.
$$
Here, $V_{p-1}$ is the analogue of the Steinberg representation for quantum $\mathfrak{sl}_2$, or for $sl_2$ in positive characteristic.
In this case, $V_{p-2}$ and $V_1\otimes V_{p-1}$ are rigid, but $V_p$ is not.
Although the Grothendieck-Verdier dual of the inclusion $V_{p-2} \to V_1\otimes V_{p-1}$ is surjective, the rigid dual of $V_{p-2}$ is not the same as its Grothendieck-Verdier dual, so the rigid dual of the inclusion $V_{p-2} \to V_1\otimes V_{p-1}$ cannot be surjective.
\end{rem}

\subsection*{Acknowledgements}
This collaboration started at the 2024 Annual Meeting of the Simons Collaboration on Global Categorical Symmetries.
Pavel Etingof is grateful to Yi-Zhi Huang for a discussion at MIT in Fall 2021 which motivated him to think about the problem solved in this paper.
The authors would like to thank
Kevin Coulembier,
Thomas Creutzig,
Andr\'e Henriques,
Corey Jones,
Robert McRae,
Victor Ostrik,
Emily Peters,
Kenichi Shimizu,
and
Harshit Yadav
for helpful conversations.
We thank Victor Ostrik for suggesting that our results should simplify results in \cite{MR1239507}, leading to \S\ref{sec:NonsemisimpleBraidedRCategories}.
Finally, we thank an anonymous referee for supplying the example of an $r$-category that is not weakly rigid in Footnote \ref{footnote:WeaklyRigidNeqRCat}.
Pavel Etingof was supported by NSF grant DMS-2001318 and
David Penneys was supported by NSF grant DMS-2154389.

\bibliographystyle{alpha}
{\footnotesize{

}}
\end{document}